\newtheorem{theorem}{Theorem}[section]
\newtheorem{definition}[theorem]{Definition}
\newtheorem{defn}[theorem]{Definition}
\newtheorem{lemma}[theorem]{Lemma}
\newtheorem{lem}[theorem]{Lemma}
\newtheorem{proposition}[theorem]{Proposition}
\newtheorem{pro}[theorem]{Proposition}
\newtheorem{corollary}[theorem]{Corollary}
\newtheorem{oss}[theorem]{Remark}
\newcommand{\ff}{\mathcal{O}}
\newcommand{\C}{\mathbb{C}}
\DeclareMathOperator{\IIm}{Im}
\newcommand{\HH}{\mathbb{H}}
\DeclareMathOperator{\RRe}{Re}
\DeclareMathOperator{\syz}{syz}
\DeclareMathOperator{\ext}{ext}
\newcommand{\hh}{{\mathbb{H}}}
\newcommand{\s}{{\mathbb{S}}}
\newcommand{\rr}{{\mathbb{R}}}
\newcommand{\nn}{{\mathbb{N}}}
\newcommand{{\ee}}{{\`e}}
\newcommand{{\aA }}{{\`a}}
\newcommand{{\oo}}{{\`o}}
\newcommand{{\uu}}{{\`u}}
\newcommand{{\ii}}{{\`i}}
\title{Ideals of regular functions of a quaternionic variable}
\author{Graziano Gentili}
\address{Graziano Gentili \\ Dipartimento di Matematica e Informatica ``U. Dini'' \\ Universit\`a di Firenze \\
Viale Morgagni 67/A, 50134 Firenze, Italy}
\email{gentili@math.unifi.it}
\author{Giulia Sarfatti}
\address{Giulia Sarfatti \\ Dipartimento di Matematica e Informatica ``U. Dini'' \\ Universit\`a di Firenze \\
Viale Morgagni 67/A, 50134 Firenze, Italy}
\email{sarfatti@math.unifi.it}
\author{Daniele C. Struppa}
\address{Daniele C. Struppa \\ Schmid College of Science \& Technology \\ Chapman University \\
One University Drive, Orange, CA 92866, USA}
\email{struppa@chapman.edu}
\begin{document}

\begin{abstract} In this paper we prove that, for any $n\in \nn$, the ideal generated by $n$ slice regular functions   $f_1,\ldots,f_n$ having no common zeros coincides with the entire ring of slice regular functions. The proof required the study of the non-commutative syzygies of a vector of regular functions, that manifest a different character when compared with their complex counterparts.
%This result was proved under stronger hypotheses, which are now not necessary anymore, in \cite{sarfatti}. The proof relies on a careful study of the local behavior of slice regular functions, and on some  features of the syzygies of three or more such functions.

%In particular we present in full detail the case $n=2$, and we use the case $n=3$ to illustrate unexpected features of the sygyzies of three or more slice regular functions.
\vskip 0.5 cm
{\bf Mathematics Subject Classification (2010): } 30G35

{\bf Keywords:} Functions of a quaternionic variable, Ideals of regular functions.

\end{abstract}
\maketitle

\section{Introduction}

The theory of slice regular functions of a quaternionic variable (often simply called regular functions) has been introduced in \cite{CRAS}, \cite{3}, and further developed in a series of papers, including in particular \cite{advances2}, where most of the recent developments are discussed.  The  full theory is presented in the monograph \cite{libroGSS}, while an extension of the theory to the case of real alternative algebras is discussed in \cite{ghiloniperotti0}, \cite{ghiloniperotti1} and \cite{ghiloniperotti2}.
The theory of regular functions has been applied to the study of a non-commutative functional calculus, (see for example the monograph \cite{librofc} and the references therein) and to address the problem of the construction and classification of orthogonal complex structures in open subsets of the space $\hh$ of quaternions (see \cite{GenSalSto}).
%The definition of regular function we use in this paper is the following.
%\begin{definition}
%Let $\Omega$ be a domain in $\hh$, and let $\mathbb{S}$ denote the $2-$sphere of imaginary units in $\hh$. A function $f: \Omega \rightarrow \hh$ is called {\em regular} if, for all $I\in \mathbb{S}$,  its restriction $f_I$ to $\Omega_I:=\Omega \cap (\rr + \rr I)$ is \emph{holomorphic}, i.e, if it has continuous partial derivatives and satisfies
%$$\overline{\partial_I} f(x+yI):=\frac{1}{2}\Big(\frac{\partial}{\partial x}  +I\frac{\partial}{\partial y}\Big)f_I(x+yI)=0$$
%for every $x+yI \in \Omega_I$.
%\end{definition}
%As shown in \cite{3}, if $\Omega$ is an open ball centered at a real point of $\hh$, the class of regular functions coincides with the class of convergent power series of type $\sum_{n=0}^{+\infty} q^na_n$, with all $a_n\in \hh$.
%
In many cases, the results one obtains in the theory of regular functions are inspired by complex analysis, though they often require essential modifications, due to the different nature of zeroes and singularities of regular functions. 
%Examples of these results, including those on power and Laurent series expansions, can be found in  \cite{saba}, \cite{runge}, \cite{appl.aperta}, \cite{powerseries}, \cite{4}, \cite{shapiro}, \cite{caterina}, \cite{singularities}. Recent results of geometric theory of regular functions appear in \cite{BohrDGS}, \cite{BlochDGS}, \cite{LTPreprint}.
%\vskip 0.3cm
Examples of this latter kind of results include those on power and Laurent series expansions, and can be found in the monograph \cite{libroGSS}. Some recent results of geometric theory of regular functions, not included in this monograph, appear in  \cite{runge}, \cite{BlochDGS}, \cite{LTPreprint}.
 \vskip 0.3cm

In this paper we study the ideals in the (non-commutative) ring of regular functions, and we prove an analogue of a classical result for one (and several) complex variables, namely the fact that if a family of holomorphic functions has no common zeroes, then it generates the entire ring of holomorphic functions. In her doctoral dissertation \cite{sarfatti}, the author proved that this was the case for regular functions as well (in fact, she showed that this was true for bounded regular functions, an analogue of the corona theorem), under the strong hypothesis that not only the functions could not have common zeroes, but also that the functions could not have zeroes on the same spheres.

Here we show that such a request is not necessary, at least for the case of regular functions (we do not consider the bounded case), by employing some delicate local properties of such functions. We show how to reduce the study of the problem to the case of holomorphic functions, and we then use the coherence of the sheaf of holomorphic functions to show that the local solution to the problem extends to a global one.

As for the state of the art in the study of the corona problem in different contexts, we refer the reader to the recent, and rather exhaustive, volume \cite{fields}, whose first chapter presents a short history of the problem itself. The papers \cite{struppa1}, \cite{treil2}, \cite{treil1}, contain significative descriptions of ideals of holomorphic functions, in connection with their maximality.  Sheaves of slice regular functions are introduced in \cite{sheaves}.

%\emph{Acknowledgments}. The first two authors express their gratitude to Chapman University, where a portion of this work was carried out.
%The class of regular functions includes convergent power series and polynomials of a quaternionic variable, whose real (and imaginary parts) are not in general harmonic as functions of four real variables. Thus, the properties of the real part of a regular function cannot be modeled upon those known for real parts of holomorphic functions.
%
%In this paper, we will show that while these functions are not harmonic, they still satisfy a version of the maximum principle, and a peculiar form of the identity
%principle, which will state that if the real part of a regular function vanishes on three suitable complex planes containing the real axis in $\hh$, then they vanish everywhere (see section $2$ for the precise statement). We will also provide a full characterization of functions on $\hh$ which are real parts of regular functions.

\section{Preliminary Results}

Let $\hh$ denote the non commutative real algebra of quaternions with standard basis $\{1,i,j,k\}$. The elements of the basis satisfy the multiplication rules
\[i^2=j^2=k^2=-1,\; ij=k=-ji,\; jk=i=-kj,\; ki=j=-ik,\]
which, if we set $1$ as the neutral element, extend by distributivity to all $q= x_0 +x_1i+x_2j+x_3k$ in $\hh$.
Every element of this form is composed by the {\em real} part $\RRe(q)=x_0$ and the {\em imaginary} part  $\IIm(q)=x_1i +x_2j +x_3k$. The {\em conjugate} of $q\in\hh$ is then $\bar{q}=\RRe(q)-\IIm(q)$ and its {\em modulus} is defined as $|q|^2=q\bar{q}.$ We can therefore calculate the multiplicative inverse of each $q\neq 0$ as $q^{-1}=\frac{\bar{q}}{|q|^2}$.
Notice that for all non real $q\in \HH$, the quantity $\frac{\IIm(q)}{|\IIm(q)|}$ is an imaginary unit, that is a quaternion whose square equals $-1$. Then we can express every $q\in \HH$ as $q=x+yI$, where $x,y$ are real (if $q\in\rr$, then $y=0$) and $I$ is an element of the unit $2$-dimensional sphere of purely imaginary quaternions,
$$\mathbb{S}=\{q\in \HH \ | \ q^{2}=-1 \}.$$
In the sequel, for every $I\in \mathbb{S}$ we will denote by $L_I$ the plane $\mathbb{R}+\mathbb{R}I$, isomorphic to $\mathbb{C}$ and, if $\Omega$ is a subset of $\HH$, by $\Omega_I$ the intersection $\Omega \cap L_I$.
As explained in \cite{libroGSS}, the natural domains of definition for slice regular functions are the symmetric slice domains. These domains actually play the role played by domains of holomorphy in the complex case:
\begin{definition}\label{slicedomain}\index{slice domain}
Let $\Omega$ be a domain in $\hh$ that intersects the real axis. Then:
\begin{enumerate}
\item $\Omega$ is called a \emph{slice domain} if, for all $I \in \s$, the intersection $\Omega_I$ with the complex plane $L_I$ is a domain of $L_I$;
\item $\Omega$ is called a \emph{symmetric domain} if for all $x,y\in \rr$, $x+yI\in \Omega$ implies $x+y\s\subset \Omega$.
\end{enumerate}
\end{definition}
We can now recall the definition of slice regularity. From now on, $\Omega$ will always be a symmetric slice domain in $\hh$, unless differently stated.
\begin{defn}\label{holomorphic}
A function $f: \Omega \rightarrow \HH$ is said to be {\em (slice) regular} if, for all $I\in \mathbb{S}$,  its restriction $f_I$ to $\Omega_I$ has continuous partial derivatives and is \emph{holomorphic}, i.e., it satisfies
$$\overline{\partial}_I f(x+yI):=\frac{1}{2}\Big(\frac{\partial}{\partial x}  +I\frac{\partial}{\partial y}\Big)f_I(x+yI)=0$$
for all $x+yI \in \Omega_I$.
\end{defn}
\noindent
%In the sequel we may refer to the vanishing of $\overline{\partial}_I f$ saying that the restriction $f_I$ is \emph{holomorphic} on $\Omega_I$.

%The preliminary results that follow will be stated for regular functions defined on open balls centred at the origin, 
%$B=B(0,R)=\{q \in \HH \, |\, |q|<R \}$, even if  in most of the
%cases these results hold, with appropriate changes,   for regular functions defined on slice domains, \cite{ext}.
%\noindent From a slicewise version of the Cauchy Integral Formula, see \cite{GSAdvances}, it follows that the $n$-th coefficient $a_n$ of the power series expansion of a regular function $f:B(0,R) \rightarrow \HH$ has an integral representation. More precisely, for all $n \geq 0$, if $I \in \s$, $r \in (0,R)$ and $\Delta_I(0,r)=\{ z \in L_I \ | \ |z|<r\} $, then
%\begin{equation}\label{intrep}
%a_n= \frac{1}{2\pi I}\int_{\partial \Delta_I(0,r)} \frac{f(z)}{z^{n+1}}dz.
%\end{equation}
\noindent A basic result in the theory of regular functions, that relates slice regularity and classical holomorphy, is the following, \cite{libroGSS, 3}:
\begin{lem}[Splitting Lemma]\label{split}
If $f$ is a regular function on $\Omega$, then for every $I \in \mathbb{S}$ and for every $J \in \mathbb{S}$, $J$ orthogonal to $I$, there exist two holomorphic functions $F,G:\Omega_I \rightarrow L_I$, such that for every $z=x+yI \in \Omega_I$, it holds
$$f_I(z)=F(z)+G(z)J.$$
\end{lem}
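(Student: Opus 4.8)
The plan is to work one slice at a time, inside the fixed plane $L_I$, and to use the real orthonormal basis $\{1,I,J,IJ\}$ of $\hh$. First I would record the algebraic setup: since $J\in\s$ is orthogonal to $I\in\s$, the quadruple $\{1,I,J,IJ\}$ is an $\rr$-basis of $\hh$, one has $IJ=-JI$, the plane $L_I=\rr+\rr I$ is a commutative subalgebra isomorphic to $\C$, and $\hh=L_I\oplus L_IJ$ as a direct sum of real subspaces (in fact as a left $L_I$-module), with $L_I\cap L_IJ=\{0\}$.

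Next I would expand the restriction. For $z=x+yI\in\Omega_I$ write $f_I(z)=f_0(z)+f_1(z)I+f_2(z)J+f_3(z)IJ$ with real-valued $f_0,f_1,f_2,f_3$ on $\Omega_I$, and regroup this as $f_I(z)=F(z)+G(z)J$, where $F:=f_0+f_1I$ and $G:=f_2+f_3I$ take values in $L_I$. The assumption in Definition~\ref{holomorphic} that $f_I$ has continuous partial derivatives is inherited componentwise by $F$ and $G$, so the only thing left to prove is that $F$ and $G$, viewed as $L_I$-valued functions on $\Omega_I$, are holomorphic, i.e. satisfy the Cauchy--Riemann equations $\overline{\partial}_IF=\overline{\partial}_IG=0$.

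To get this I would apply $\overline{\partial}_I=\tfrac12(\partial_x+I\partial_y)$ to $f_I=F+GJ$. Using $\rr$-linearity of differentiation, associativity of quaternionic multiplication, and the closure of $L_I$ under multiplication, one has $\partial_x(GJ)=(\partial_xG)J$ and $I\big((\partial_yG)J\big)=\big(I\,\partial_yG\big)J$ with $I\,\partial_yG\in L_I$, whence $\overline{\partial}_If_I=(\overline{\partial}_IF)+(\overline{\partial}_IG)J$. Here $\overline{\partial}_IF\in L_I$ while $(\overline{\partial}_IG)J\in L_IJ$; since $f$ is regular we have $\overline{\partial}_If_I=0$, and because $L_I\cap L_IJ=\{0\}$ the two summands vanish separately. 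Identifying $L_I$ with $\C$ via $I\mapsto i$, this says exactly that $F$ and $G$ are holomorphic on $\Omega_I$, which is the assertion.

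The only delicate point — and it is bookkeeping rather than a genuine obstacle — is the non-commutativity: one must make sure, in the computation above, that the left multiplication by $I$ hidden in $\overline{\partial}_I$ is moved past the right multiplication by $J$ without permuting the $L_I$-valued factors, which is legitimate precisely because multiplication is associative and $L_I$ is a commutative subalgebra closed under the relevant products. Apart from that, the argument is just the classical decomposition of the Cauchy--Riemann equation into its $L_I$- and $L_IJ$-components, carried out on each slice $\Omega_I$.
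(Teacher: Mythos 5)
Your proof is correct and is precisely the standard argument for the Splitting Lemma: the paper does not prove it but cites \cite{libroGSS, 3}, and the proof given there is exactly this decomposition of $\hh$ as $L_I\oplus L_IJ$ followed by splitting $\overline{\partial}_I f_I=\overline{\partial}_I F+(\overline{\partial}_I G)J=0$ into its two components. The computational details (associativity to move $I$ past the right factor $J$, and $L_I\cap L_IJ=\{0\}$ to separate the two summands) are handled correctly.
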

\noindent One of the first consequences of the previous result is the following version of the Identity Principle, \cite{3}:
\begin{theorem}[Identity Principle]\label{Id}
Let $f$ be a regular function on $\Omega$. Denote by $Z_f$ the zero set of $f$, $Z_f=\{ q \in \Omega | \, f(q)=0 \}$. If there exists $I \in \mathbb{S}$ such that $\Omega_I \cap  Z_f$ has an accumulation point in $\Omega_I$, then
$f$ vanishes identically on $\Omega$.
\end{theorem}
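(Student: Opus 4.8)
The plan is to reduce the statement, one slice at a time, to the classical Identity Principle for holomorphic functions of a single complex variable, the bridge being the Splitting Lemma~\ref{split}. Concretely, I would first exploit the hypothesis to kill $f$ on the distinguished slice $\Omega_I$, then observe that $f$ consequently vanishes on the real trace $\Omega\cap\rr$, and finally use this real vanishing to kill $f$ on every other slice $\Omega_K$.

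For the first step, fix $J\in\s$ with $J$ orthogonal to $I$ and apply Lemma~\ref{split} to write $f_I=F+GJ$ on $\Omega_I$, with $F,G\colon\Omega_I\to L_I$ holomorphic. Since $L_I$ and $L_IJ$ intersect trivially, the identity $f_I=F+GJ$ is a direct-sum decomposition, so $f_I$ vanishes at a point of $\Omega_I$ exactly when both $F$ and $G$ do. By hypothesis $\Omega_I\cap Z_f$ has an accumulation point in the domain $\Omega_I\subset L_I\cong\cc$; hence $F$ and $G$ are holomorphic functions on a planar domain whose zero sets accumulate inside the domain, and the classical Identity Principle forces $F\equiv G\equiv 0$, i.e. $f_I\equiv 0$ on $\Omega_I$.

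Since $\rr\subset L_I$, this already gives $f\equiv 0$ on $\Omega\cap\rr$, which is a non-empty (because $\Omega$ meets the real axis) open subset of $\rr$, hence a set whose accumulation points lie in $\rr$. Now take an arbitrary $K\in\s$. Because $\Omega$ is a slice domain, $\Omega_K$ is a domain of $L_K$, and it contains $\Omega\cap\rr$; therefore $\Omega_K\cap Z_f\supseteq\Omega\cap\rr$ has an accumulation point in $\Omega_K$. Repeating verbatim the Splitting-Lemma argument applied to $f_K$ on $\Omega_K$ yields $f_K\equiv 0$ on $\Omega_K$, and since $K$ was arbitrary and $\Omega=\bigcup_{K\in\s}\Omega_K$, we conclude $f\equiv 0$ on $\Omega$. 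The one place that requires care is this propagation: the hypothesis only supplies accumulation of zeros on a single slice, while the conclusion is about all of $\hh$, and it is exactly the slice-domain assumption (together with $\Omega\cap\rr\neq\emptyset$) that lets the information pass from the given slice to the common real trace and thence to every slice; without it the zeros on $\Omega_I$ would carry no information about the behaviour of $f$ elsewhere.
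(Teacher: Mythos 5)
Your proof is correct and is essentially the standard argument for this result: the paper itself states the theorem without proof, citing \cite{3}, and the proof given there proceeds exactly as you do --- split $f_I=F+GJ$, apply the classical one-variable Identity Principle to $F$ and $G$ on the slice $\Omega_I$, and then propagate the vanishing through the real trace $\Omega\cap\rr$ to every other slice. Your remark that the slice-domain hypothesis (connectedness of each $\Omega_K$ and nonempty intersection with $\rr$) is what makes the propagation work correctly identifies the one delicate point.
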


\noindent In the sequel we will use an important extension result (see \cite{saba}, \cite{advances2}) that we will present in the following special formulation:

%\begin{lemma}[Extension Lemma] \label{extensionlemma}\index{extension $\ext(f_I)$}
%Let $\Omega$ be a symmetric slice domain and let $I \in \s$. If $f_I : \Omega_I \to \hh$ is holomorphic then there exists a unique regular function $g : \Omega \to \hh$ such that $g_I = f_I$ in $\Omega_I$. The function $g$ will be denoted by $\ext(f_I)$.
%\end{lemma}
%
%
%
%
%%\begin{oss}\label{ext1}
%%If $f_I$ is a holomorphic function on a disc $B_I=B(0,R)\cap L_I$ and its power series expansion is
%%\[ f_I(z)=\sum_{n=0}^{\infty}z^na_n, \quad \text{ with $\{a_n\}_{n\in \mathbb{N}}\subset \HH$}, \]
%%then the unique regular extension of $f_I$ to the whole ball $B(0,R)$ is the function defined as
%%\[ \ext (f_I) (q) = \sum_{n=0}^{\infty}q^na_n .\]
%%The uniqueness is guaranteed by the Identity Principle \ref{Id}.
%%\end{oss}

%On open balls centred at the origin the $*$-product of two regular functions can be defined by means of power series, \cite{zeri}.

%The generalization to the symmetric slice domains is based on the followin result, which proof is in \cite{ext}.
\begin{lem}[Extension Lemma]\label{extensionlemma}
Let $\Omega$ be a symmetric slice domain and choose $I \in \s$. If $f_I: \Omega_I \rightarrow \HH$ is holomorphic, then setting
\begin{equation*}
f(x+yJ)=\frac{1}{2}[f_I(x+yI)+f_I(x-yI)] +J\frac{I}{2}[f_I(x-yI)-f_I(x+yI)]
\end{equation*}
extends $f_I$ to a regular function $f: \Omega \rightarrow \HH$. Moreover $f$ is the unique extension and it is denoted by $\ext(f_I)$.
\end{lem}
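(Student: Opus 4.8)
The plan is to establish three things in turn: that the displayed formula defines a single-valued function $f$ on all of $\Omega$; that this $f$ is regular and restricts to $f_I$ on $\Omega_I$; and that it is the unique such extension. A convenient normalization, obtained by applying the Splitting Lemma to the holomorphic datum $f_I$, is to write $f_I=F+GJ$ with $J\in\s$, $J\perp I$, and $F,G\colon\Omega_I\to L_I$ holomorphic in the usual sense; equivalently, in the real coordinates $x+yI$ on $L_I$, the map $f_I$ satisfies the Cauchy--Riemann identity $\partial_y f_I=I\,\partial_x f_I$ on $\Omega_I$.

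For well-definedness: since $\Omega$ is symmetric and meets $\rr$, whenever $q=x+yK\in\Omega$ (with $K\in\s$) both $x+yI$ and $x-yI$ lie in $\Omega_I$, so the right-hand side is meaningful. The only indeterminacy in the representation $q=x+yK$ is the simultaneous change $(y,K)\mapsto(-y,-K)$ (and, for $q$ real, the choice of $K$, which is irrelevant because it multiplies $f_I(x+yI)-f_I(x-yI)=0$), and one checks by inspection that the formula is invariant under $(y,K)\mapsto(-y,-K)$. Hence $f\colon\Omega\to\hh$ is well defined, and choosing $K=I$ and using $I^2=-1$ collapses the formula to $f(x+yI)=f_I(x+yI)$, so $f$ extends $f_I$.

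For regularity, write $f(x+yK)=\alpha(x,y)+K\,\beta(x,y)$, where $\alpha=\tfrac12[f_I(x+yI)+f_I(x-yI)]$ and $\beta=\tfrac{I}{2}[f_I(x-yI)-f_I(x+yI)]$ are $C^\infty$ in $(x,y)$ because $f_I$, being holomorphic, is smooth. For fixed $K$,
$$\overline\partial_K f_K(x+yK)=\tfrac12\bigl(\partial_x+K\partial_y\bigr)(\alpha+K\beta)=\tfrac12\bigl[(\alpha_x-\beta_y)+K(\alpha_y+\beta_x)\bigr],$$
so holomorphy of $f_K$ on $\Omega_K$, for every $K\in\s$, amounts to the two identities $\alpha_x=\beta_y$ and $\alpha_y=-\beta_x$. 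These follow by differentiating $\alpha$ and $\beta$ and substituting the Cauchy--Riemann identity $\partial_y f_I=I\,\partial_x f_I$ evaluated at the points $x+yI$ and $x-yI$: after using $I^2=-1$, the contributions of $(\partial_x f_I)(x+yI)$ and $(\partial_x f_I)(x-yI)$ recombine and the two equalities drop out. Since the partial derivatives are continuous, $f$ is regular on $\Omega$. For uniqueness, if $g$ is regular on $\Omega$ with $g|_{\Omega_I}=f_I$, then $g-f$ is regular and vanishes on $\Omega_I$, a nonempty open subset of $L_I$ (it contains $\Omega\cap\rr$) that therefore has accumulation points in $\Omega_I$; the Identity Principle then gives $g\equiv f$.

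I expect no serious obstacle here: the statement is essentially a bookkeeping verification. The one place demanding care is the regularity computation, where one must respect non-commutativity and never commute the left factors $K$ or $I$ past the $\hh$-valued derivatives of $f_I$; splitting $f_I=F+GJ$ into its $L_I$-valued components reduces this to the classical Cauchy--Riemann equations for $F$ and $G$ separately, which makes the cancellation transparent.
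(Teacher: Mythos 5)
Your proof is correct. Note, however, that the paper itself offers no proof of this lemma to compare against: it is quoted as a known result from the literature (the references \cite{saba} and \cite{advances2}), so your argument is a self-contained verification rather than a variant of an argument in the text. The three steps are all sound: well-definedness reduces, as you say, to invariance under $(y,K)\mapsto(-y,-K)$ plus the vanishing of the second term on the real axis; the regularity computation goes through because, writing $u(x,y)=f_I(x+yI)$ and $v(x,y)=f_I(x-yI)$, the hypotheses give $u_y=Iu_x$ and $v_y=-Iv_x$ with $I$ acting on the \emph{left}, so the factors of $I$ in $\beta=\tfrac{I}{2}(v-u)$ combine via $I^2=-1$ without ever needing to commute $I$ past an $\hh$-valued quantity (so the splitting $f_I=F+GJ$ you mention as a safeguard is not actually needed); and uniqueness follows from the Identity Principle since $\Omega_I$ is a domain of $L_I$ and hence its points are accumulation points. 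One cosmetic remark: you invoke the Splitting Lemma for the datum $f_I$, but as stated that lemma applies to restrictions of functions already regular on $\Omega$; the splitting of a standalone $\hh$-valued holomorphic map on $\Omega_I$ is elementary (decompose $\hh=L_I\oplus L_IJ$), and in any case your computation never uses it.
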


\noindent The product of two regular functions is not, in general, regular. To guarantee the regularity we have to use a different  multiplication operation, the $*$-product. From now on, if $F$ is a holomorphic function, we will use the notation:
$$
\hat F(z) := \overline{F(\bar z)}.
$$

%\noindent In order to define the regular product of $f$ and $g$, regular functions on a symmetric slice domain $\Omega$, take $I,J \in \s$, with $I$ orthogonal to $J$, and choose holomorphic functions $F,G,H,K: \Omega_I \rightarrow L_I$ such that for all $z\in \Omega_I$
%$$f_I(z)=F(z)+G(z)J \quad \text{and} \quad g_I(z)=H(z)+K(z)J.$$
%Let $f_I * g_I : \Omega_I \rightarrow L_I$ be the holomorphic function defined as
%$$f_I * g_I(z)=[F(z)H(z)-G(z)\overline{K(\overline{z})}]+[F(z)K(z)+G(z)\overline{H(\overline{z})}]J.$$
%The following definition is given in \cite{ext}.

\begin{definition}\label{R-starprodotto}\index{regular product} \index{$*$-product}
Let $f,g$ be regular functions on a symmetric slice domain $\Omega$. Choose $I,J \in \s$ with $I \perp J$ and let $F,G,H,K$ be holomorphic functions from $\Omega_I$ to $L_I$ such that $f_I = F+GJ, g_I = H+KJ$. Consider the holomorphic function defined on $\Omega_I$ by
\begin{equation}\label{prodottostar}
f_I*g_I(z) = \left[F(z)H(z)-G(z)\hat K(z)\right] + \left[F(z)K(z)+G(z)\hat H(z ) \right]J.
\end{equation}
Its regular extension $\ext(f_I*g_I)$ is called the \emph{regular product} (or \emph{$*$-product}) of $f$ and $g$ and it is denoted by $f*g$.
\end{definition}

\noindent Notice that the $*$-product is associative but generally is not commutative. Its connection with the usual pointwise product is stated by the following result.
\begin{pro}\label{trasf}
Let $f(q)$ and $g(q)$ be regular functions on $\Omega$. Then, for all $q\in \Omega$,
\begin{equation}%\label{prodstar}
f*g(q)= \left\{ \begin{array}{ll}
 f(q)g(f(q)^{-1}qf(q)) & \text{if} \quad f(q)\neq 0\\
0 & \text{if} \quad f(q)=0
\end{array}
\right.
\end{equation}
\end{pro}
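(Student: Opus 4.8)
The plan is the following. Fix an arbitrary $q\in\Omega$ and choose $I\in\s$ with $q\in L_I$, writing $q=z=x+yI$; pick $J\in\s$ with $J\perp I$ and apply the Splitting Lemma to obtain holomorphic functions $F,G,H,K\colon\Omega_I\to L_I$ with $f_I=F+GJ$ and $g_I=H+KJ$. Since $f*g=\ext(f_I*g_I)$ restricts to $f_I*g_I$ on $\Omega_I$, and $z\in\Omega_I$, it suffices to compute the value at $z$ of the right-hand side of \eqref{prodottostar}. If $f(q)=0$, then $F(z)+G(z)J=0$ with $F(z),G(z)\in L_I$; since $1$ and $J$ are linearly independent over $L_I$ inside $\hh$, this forces $F(z)=G(z)=0$, and \eqref{prodottostar} gives at once $f_I*g_I(z)=0$, that is $f*g(q)=0$.

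Assume now $f(q)\neq 0$ and put $a:=f(q)=F(z)+G(z)J$. Conjugation by $a$ fixes $\rr$ pointwise and maps $\s$ onto $\s$, so $w:=a^{-1}qa=x+yI'$ with $I':=a^{-1}Ia\in\s$; by the symmetry of $\Omega$ we have $w\in x+y\s\subset\Omega$, and by the symmetry of $\Omega_I$ we have $\bar z=x-yI\in\Omega_I$, so that $g_I(z)$ and $g_I(\bar z)$ are both defined. Applying the Extension Lemma to $g=\ext(g_I)$ in the direction $I'$ yields $g(w)=b+I'c$ with $b=\tfrac12\bigl[g_I(z)+g_I(\bar z)\bigr]$ and $c=\tfrac I2\bigl[g_I(\bar z)-g_I(z)\bigr]$. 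Since $aI'=a\,a^{-1}Ia=Ia$, this gives $f(q)\,g(w)=a\bigl(b+a^{-1}Iac\bigr)=ab+Iac$, so the whole statement reduces to the identity $ab+Iac=f_I*g_I(z)$.

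To prove this identity, write $g_I(z)=H(z)+K(z)J$ and $g_I(\bar z)=H(\bar z)+K(\bar z)J$, and split the $1$- and $J$-components of $b$ and $c$ as $b=H_++K_+J$ and $c=H_-+K_-J$ with $H_\pm,K_\pm\in L_I$. Two elementary identities make the computation collapse: for each $\lambda\in\{H,K\}$ one has $\lambda_++I\lambda_-=\lambda(z)$, and $\overline{\lambda_\pm}=\hat\lambda_\pm$ (conjugation taken in $L_I$), where $\hat\lambda_\pm$ are the analogous components formed from $\hat\lambda$, so that also $\hat\lambda_++I\hat\lambda_-=\hat\lambda(z)$. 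Expanding $ab$ and $Iac$ with the rule $J\mu=\bar\mu J$ valid for $\mu\in L_I$ (whence $J\mu J=-\bar\mu$), and using that $I$ commutes with $L_I$, one gets
\[
ab+Iac=\bigl[F(z)H(z)-G(z)\hat K(z)\bigr]+\bigl[F(z)K(z)+G(z)\hat H(z)\bigr]J,
\]
which is exactly the value at $z$ of the right-hand side of \eqref{prodottostar}, i.e.\ $f_I*g_I(z)=f*g(q)$, and this would complete the argument.

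I expect the main obstacle to be the non-commutative bookkeeping in the last expansion: one has to track simultaneously that $J$ acts on $L_I$ by complex conjugation, that conjugating $I$ by $a$ replaces it with the new imaginary unit $I'$ — which is precisely why the ``twisted'' argument $f(q)^{-1}qf(q)$, rather than $q$ itself, is what makes $a\,g(w)$ equal to $ab+Iac$ — and that the hat operation commutes with passing to the $\pm$ components, i.e.\ $\overline{\lambda_\pm}=\hat\lambda_\pm$. The remaining points (that $w\in\Omega$ and $\bar z\in\Omega_I$, and the consistency of the Extension Lemma when evaluated along $L_I$ itself) are routine consequences of the symmetry of the domain.
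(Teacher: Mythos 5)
Your proof is correct. Note first that the paper does not prove Proposition \ref{trasf} at all: it is quoted as a known preliminary fact (it belongs to the standard toolkit of \cite{libroGSS}, where it is usually established for power series by observing that $f(q)\,g(f(q)^{-1}qf(q))=\sum_n q^n f(q)b_n=\sum_k q^k\sum_m a_m b_{k-m}$, i.e.\ the Cauchy product, and then transported to symmetric slice domains). Your argument is a genuinely different, self-contained route: you work directly from the definition of the $*$-product as $\ext(f_I*g_I)$, use the Extension Lemma to evaluate $g$ at the twisted point $w=a^{-1}qa=x+yI'$ with $I'=a^{-1}Ia$, and reduce everything to the identity $ab+Iac=f_I*g_I(z)$. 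I checked the bookkeeping: the cancellation $aI'=Ia$ is exactly what makes the twisted argument appear; the identities $\lambda_++I\lambda_-=\lambda(z)$ and $\overline{\lambda_\pm}=\hat\lambda_\pm$ (hence $\hat\lambda_++I\hat\lambda_-=\hat\lambda(z)$) hold; and expanding $ab+Iac$ with $J\mu=\bar\mu J$ for $\mu\in L_I$ reproduces precisely the right-hand side of \eqref{prodottostar}. The degenerate cases ($f(q)=0$, and $q$ real, where $c=0$ and the formula reduces to the pointwise product) are also handled. What your approach buys is independence from power series expansions, so it works verbatim on any symmetric slice domain; what it costs is the heavier non-commutative computation, which you have nonetheless carried out correctly.
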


\begin{corollary}\label{productzeros}
If $f,g$ are regular functions on a symmetric slice domain $\Omega$ and $q \in \Omega$, then $f*g(q) = 0$ if and only if $f(q) = 0$ or $f(q) \neq 0$ and $g(f(q)^{-1} q f(q))=0$.
\end{corollary}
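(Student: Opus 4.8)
The plan is to read the statement off directly from Proposition \ref{trasf}, distinguishing the two cases of its case-definition. First I would fix $q \in \Omega$ and consider the case $f(q) = 0$: then Proposition \ref{trasf} gives $f*g(q) = 0$ immediately, so both sides of the claimed equivalence are true, and there is nothing more to check in this branch.

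Next I would treat the case $f(q) \neq 0$. Here Proposition \ref{trasf} yields $f*g(q) = f(q)\, g\bigl(f(q)^{-1} q f(q)\bigr)$. Since $\hh$ is a division algebra — every nonzero quaternion is invertible, as recalled in the preliminaries via $q^{-1} = \bar q / |q|^2$ — it has no zero divisors; hence $f(q)\, g\bigl(f(q)^{-1} q f(q)\bigr) = 0$ if and only if the second factor $g\bigl(f(q)^{-1} q f(q)\bigr)$ vanishes. This is precisely the remaining clause of the equivalence, so combining the two cases gives the statement.

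I do not anticipate a genuine obstacle: the content is entirely contained in Proposition \ref{trasf} together with the absence of zero divisors in $\hh$. The only point worth a remark is that, writing $q = x + yI$, the quaternion $f(q)^{-1} q f(q)$ has the same real part and the same modulus as $q$, hence lies on the sphere $x + y\s \subset \Omega$ (using that $\Omega$ is symmetric), so that $g$ is indeed defined at that point; but this is already implicit in the formulation of Proposition \ref{trasf}, which we are allowed to invoke.
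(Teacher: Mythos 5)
Your argument is correct and is exactly the intended one: the paper states this as an immediate corollary of Proposition \ref{trasf} without writing out a proof, and the case split on $f(q)=0$ versus $f(q)\neq 0$ combined with the absence of zero divisors in $\hh$ is all that is needed. Your closing remark that $f(q)^{-1}qf(q)$ lies on the sphere $x+y\s\subset\Omega$ is a sensible sanity check, also noted in the paper just after the theorem on the structure of zero sets.
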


\noindent To illustrate the natural meaning of the $*$-product of two regular functions,  we consider two quaternionic power series, $\sum_{n=0}^\infty q^na_n$ and $\sum_{n=0}^\infty q^nb_n$, both centered at zero and with radius of convergence $R>0$. These power series define two regular functions $f(q)=\sum_{n=0}^\infty q^na_n$ and $g(q)=\sum_{n=0}^\infty q^nb_n$ on the open ball $B(0,R)\subseteq \HH$ centered at $0$ and with radius $R$ (see e.g. \cite{libroGSS}). Now, (polynomials and) power series with coefficients in a non commutative ring are classically endowed with the \emph{Cauchy product}, that even in the non commutative case is still defined as
\begin{equation}\label{coeff}
(\sum_{n=0}^\infty q^na_n)\cdot (\sum_{n=0}^\infty q^nb_n)= \sum_{n=0}^\infty q^nc_n  \textnormal{\ \ \ with\ \ \ } c_n=\sum_{m=0}^n a_mb_{n-m}
\end{equation}
 so that the sequence of coefficients $\{c_n\}$ is obtained by the convolution of the sequences $\{a_n\}$ and $\{b_n\}$. 
It turns out that

\begin{pro}
The $*$-product of the regular functions $f(q)$ and $g(q)$ coincides with the Cauchy product of their power series expansions, i.e.
\[
f*g(q)=(\sum_{n=0}^\infty q^na_n)\cdot (\sum_{n=0}^\infty q^nb_n),  
\]
on $B(0,R)$.
\end{pro}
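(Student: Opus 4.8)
The plan is to reduce the identity to a computation of Taylor coefficients on a single slice $L_I$, and then to conclude from the uniqueness of the regular extension (Lemma~\ref{extensionlemma}), equivalently from the Identity Principle (Theorem~\ref{Id}). Concretely, by Definition~\ref{R-starprodotto} we have $f*g=\ext(f_I*g_I)$, so it suffices to check that the restriction to $\Omega_I=B(0,R)_I$ of the quaternionic power series $h(q):=\sum_{k\ge0}q^kc_k$, with $c_k$ as in \eqref{coeff}, equals the holomorphic function $f_I*g_I$ given by \eqref{prodottostar}.

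Fix $I,J\in\s$ with $I\perp J$ and decompose each coefficient as $a_n=\alpha_n+\beta_nJ$, $b_n=\gamma_n+\delta_nJ$ with $\alpha_n,\beta_n,\gamma_n,\delta_n\in L_I$. Restricting the two power series to $L_I$ and using that $z\in L_I$ commutes with elements of $L_I$, the Splitting Lemma representations are $f_I=F+GJ$, $g_I=H+KJ$ with
\[
F(z)=\sum_{n\ge0}z^n\alpha_n,\qquad G(z)=\sum_{n\ge0}z^n\beta_n,\qquad H(z)=\sum_{n\ge0}z^n\gamma_n,\qquad K(z)=\sum_{n\ge0}z^n\delta_n,
\]
all holomorphic on $B(0,R)_I$; since conjugation on $L_I\cong\C$ is a field automorphism, $\hat K(z)=\sum_{n\ge0}z^n\overline{\delta_n}$ and $\hat H(z)=\sum_{n\ge0}z^n\overline{\gamma_n}$.

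Now I would substitute these four series into \eqref{prodottostar} and collect powers of $z$. Using the rules $Jp=\bar pJ$ for $p\in L_I$ and $J^2=-1$, a direct expansion gives
\[
a_mb_n=(\alpha_m\gamma_n-\beta_m\overline{\delta_n})+(\alpha_m\delta_n+\beta_m\overline{\gamma_n})J\qquad\text{for all }m,n\ge0,
\]
so that the coefficient of $z^k$ in $f_I*g_I(z)$, obtained from the series products $FH$, $G\hat K$, $FK$, $G\hat H$, is exactly $\sum_{m=0}^k\big[(\alpha_m\gamma_{k-m}-\beta_m\overline{\delta_{k-m}})+(\alpha_m\delta_{k-m}+\beta_m\overline{\gamma_{k-m}})J\big]=\sum_{m=0}^ka_mb_{k-m}=c_k$. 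Hence $f_I*g_I(z)=\sum_{k\ge0}z^kc_k$ on $B(0,R)_I$; in particular $\limsup_k|c_k|^{1/k}\le 1/R$, so $h(q)=\sum_{k\ge0}q^kc_k$ is a well-defined regular function on $B(0,R)$ whose restriction to $B(0,R)_I$ is precisely $\sum_{k\ge0}z^kc_k=f_I*g_I$. Since $f*g=\ext(f_I*g_I)$ and the regular extension is unique, $f*g=h$ on $B(0,R)$, which is the assertion.

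The computation itself is elementary; the only point that needs care is the bookkeeping of the conjugations $\hat H,\hat K$ together with the rule $Jp=\bar pJ$, which is exactly the mechanism by which the $L_I$-component and the $J$-component of \eqref{prodottostar} recombine into the single quaternionic convolution \eqref{coeff}. An equivalent and slightly more conceptual route would be to first verify the monomial identity $q^na*q^mb=q^{n+m}(ab)$ directly from \eqref{prodottostar}, and then extend to series by bilinearity together with the uniform-on-compacta convergence of the partial sums of $F,G,H,K$; I would regard the coefficient comparison above as the shortest path, with the harmless interchange of $*$ and the infinite sums being the only mild subtlety.
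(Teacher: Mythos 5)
Your proposal is correct and follows essentially the same route as the paper: split the coefficients $a_n=\alpha_n+\beta_nJ$, $b_n=\gamma_n+\delta_nJ$, compute the splitting of the convolution coefficients $c_n$, and compare with formula \eqref{prodottostar}, concluding by uniqueness of the regular extension. You merely spell out more explicitly the intermediate steps (the series for $F,G,H,K$, the rule $Jp=\bar p J$, and the final appeal to the Extension Lemma) that the paper compresses into ``a direct computation shows.''
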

\begin{proof}
Let us consider the coefficients $\{a_n\}, \{b_n\}, \{c_n\}$ of the power series appearing in Equation \eqref{coeff}. Choose $I,J$ in $\s$ with $I\perp J$ and write
\[
a_n=\alpha_n+\beta_nJ   \textnormal{\ \ \ \ and \ \ \ \ } b_n=\gamma_n +\delta _n J
\]
for suitable $\alpha_n, \beta_n, \gamma_n, \delta_n$ in $L_I$. A direct computation shows that the splitting of $c_n$ is 
\[
c_n=\sum_{m=0}^n(\alpha_m\gamma_{n-m}-\beta_m\bar \delta_{n-m}) + \sum_{m=0}^n(\alpha_m\delta_{n-m}+\beta_m\bar \gamma_{n-m})J
\]
and a comparison with equation \eqref{prodottostar} leads to the conclusion of the proof.
\end{proof}

\noindent It is immediate, and useful for the sequel, to notice that if $\{a_n\}$  are all real numbers, then we have
\[
f*g(q)=(\sum_{n=0}^\infty q^na_n)\cdot (\sum_{n=0}^\infty q^nb_n)=fg(q)=gf(q)=(\sum_{n=0}^\infty q^nb_n)\cdot (\sum_{n=0}^\infty q^na_n)=g*f(q).
\]
on the whole domain of convergence $B(0,R)$ of the power series, i.e. the $*$-product and the pointwise product coincide (and are commutative). Hence power series with real coefficients define, on their domains of convergence, regular functions that behave nicely with respect to the $*$-product; these functions are called  \emph{slice preserving} regular functions, since, for all $I\in \s$, they map subsets of $L_I$ into $L_I$. 

The following operations are naturally defined in order to study the zero set of regular functions.

\begin{definition}\label{R-coniugata}\index{regular conjugate}
Let $f$ be a regular function on a symmetric slice domain $\Omega$. Choose $I,J \in \s$ with $I \perp J$ and let $F,G$ be holomorphic functions from $\Omega_I$ to $L_I$ such that $f_I = F+GJ$. If $f_I^c$ is the holomorphic function defined on $\Omega_I$ by
\begin{equation}
f_I^c(z) = \hat F(z) - G(z)J.
\end{equation}
Then the \emph{regular conjugate} of $f$ is the regular function defined on $\Omega$ by $f^c=\ext(f_I^c)$, and the
 \emph{symmetrization} of $f$ is the regular function defined on $\Omega$ by $f^s = f*f^c = f^c*f$.
%Moreover the regular reciprocal $f^{-*}$  of   $f$ is defined as $f^{-*}=f^{-s}f^c$ on $\Omega\setminus Z_f$
\end{definition}
\noindent If the regular function $f:\Omega \to \hh$ is such that
 $
 f_I(z)=F(z)+G(z)J,
 $
with
$F,G: \Omega_I \to L_I$  holomorphic functions, then
it easy to see that  (see, e.g., \cite{libroGSS})
\begin{equation}
f^s_I=f_I*f^c_I=f^c_I*f_I
=F(z)\hat F( z)+G(z)\hat G( z).
\end{equation}
Hence $f^s(\Omega_I)\subseteq L_I$ for every $I\in \s$, i.e.,  $f^s$ is slice preserving. Moreover
if  $g$ is a  regular function on $\Omega$, then
\begin{equation}\label{uuu}
(f*g)^c=g^c*f^c \quad\textnormal{and} \quad (f*g)^s=f^sg^s=g^sf^s.
\end{equation}

\noindent Zeroes of regular functions have a nice geometric property:

\begin{theorem}
Let $f$ be a regular function on a symmetric slice domain $\Omega$. If $f$ does not vanish identically, then its zero set consists of isolated points or isolated $2$-spheres of the form $x +y \mathbb{S}$ with $x,y \in \mathbb{R}$, $y \neq 0$.
\end{theorem}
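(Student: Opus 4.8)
The plan is to combine the Identity Principle, applied slice by slice, with the good behaviour of the symmetrization $f^s$, which is slice preserving. First I would record the ``one slice'' picture: for any $I\in\s$ the Splitting Lemma gives $f_I=F+GJ$ with $F,G:\Omega_I\to L_I$ holomorphic, so $Z_f\cap\Omega_I$ is the common zero set of $F$ and $G$; since $f\not\equiv 0$, the Identity Principle forbids $Z_f\cap\Omega_I$ from having an accumulation point in $\Omega_I$, hence the zeros of $f$ on any fixed slice are isolated in that slice.

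Next I would pass to $f^s=f*f^c=f^c*f$. This function is slice preserving and not identically zero: on the real axis $f^s_I(x)=|F(x)|^2+|G(x)|^2$, so $f^s\equiv 0$ would force $F\equiv G\equiv 0$ by the Identity Principle, i.e. $f\equiv 0$. Because $f^s$ is slice preserving, on each sphere $f^s$ takes the form $x+yK\mapsto u(x,y)+Kv(x,y)$ with $u,v$ real, so $Z_{f^s}$ is a union of $2$-spheres $x+y\mathbb{S}$ (degenerating to isolated real points when $y=0$); and because $f^s_I$ is holomorphic and not identically zero on $\Omega_I$, a second application of the Identity Principle shows that these spheres and real points form a locally finite family in $\Omega$ (a sequence of distinct components accumulating in $\Omega$ would project, via a fixed slice, to a sequence of distinct zeros of $f^s_I$ accumulating in $\Omega_I$). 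Finally, by Corollary \ref{productzeros} applied to $f^s=f*f^c$, any zero of $f$ is a zero of $f^s$, so $Z_f\subseteq Z_{f^s}$; in particular every sphere (or real point) meeting $Z_f$ is one of these isolated components.

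It then remains to describe $Z_f\cap S$ for a fixed sphere $S=x_0+y_0\mathbb{S}$ with $y_0\neq 0$. Here I would invoke the Extension Lemma: fixing $I$ and writing $q_0=x_0+y_0I$, $\bar q_0=x_0-y_0I$, the formula of Lemma \ref{extensionlemma} gives, for every $K\in\s$,
\[
f(x_0+y_0K)=a+Kb,\qquad a=\tfrac{1}{2}\big(f_I(q_0)+f_I(\bar q_0)\big),\quad b=\tfrac{I}{2}\big(f_I(\bar q_0)-f_I(q_0)\big),
\]
so $f|_S$ is an affine function of the imaginary unit $K$. If $b=0$, then $f|_S\equiv a$, hence $S\subseteq Z_f$ if $a=0$ and $S\cap Z_f=\emptyset$ otherwise; if $b\neq 0$, the equation $a+Kb=0$ has the single solution $K=-ab^{-1}$, which lies in $\s$ or not, so $S\cap Z_f$ is a single point or is empty. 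Thus for every isolated component $C$ of $Z_{f^s}$ the set $Z_f\cap C$ is empty, a single point, or all of $C$ (the last case only when $C$ is a genuine sphere), and since the components $C$ do not accumulate in $\Omega$, neither do the nonempty sets $Z_f\cap C$. Assembling these pieces (and noting that real points of $Z_f$ contribute isolated points) yields the stated description of $Z_f$.

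I expect the only genuinely delicate points to be, first, the verification that $f^s\not\equiv 0$ together with the claim that $Z_{f^s}$ is precisely a locally finite union of spheres and real points — this rests on combining slice preservation with the Identity Principle — and, second, the bookkeeping around the affine equation $a+Kb=0$ on $\s$, making sure the ``one point'' alternative is correctly matched with the $Z_{f^s}$-sphere that contains it. Everything else is essentially formal manipulation of the Splitting, Extension, and $*$-product machinery already recalled above.
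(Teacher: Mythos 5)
Your argument is correct. The paper does not actually prove this theorem --- it is recalled as a known preliminary result from \cite{libroGSS} and \cite{3} --- and your proof is essentially the standard one from that literature: reduce to the symmetrization $f^s$, use that it is slice preserving and nonzero (via its values on the real axis) together with the Identity Principle to get a locally finite union of spheres and real points containing $Z_f$, and then use the representation formula of Lemma \ref{extensionlemma} to see that on each sphere $f$ restricts to an affine map $K\mapsto a+Kb$, whose zero set is empty, a point, or the whole sphere. The only steps worth writing out with full care are the Schwarz-reflection identity $f^s_I(\bar z)=\overline{f^s_I(z)}$ behind the claim that $f^s(x+yK)=u(x,y)+Kv(x,y)$, and the final bookkeeping showing that local finiteness of the components of $Z_{f^s}$ forces isolation of the point zeros of $f$; both go through as you indicate.
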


%
%\begin{proposition}\label{R-fessegesse3} Let $f$  be a regular function on a symmetric slice domain $\Omega$, such that $f(\Omega_I)\subseteq L_I$ for all $I\in \s$. Then $f^c(q)=f(q)$ and $f^s(q) = f(q)^2$ for all $q \in \Omega$.
%\end{proposition}

\noindent Notice that $f(q)^{-1} q f(q)$ belongs to the same sphere $x+y\s$ as $q$. Hence each zero of $f*g$ in $x+y\s$ corresponds to a zero of $f$ or to a zero of $g$ in the same sphere.
%However, the correspondence between $Z_{f*g}$ and $Z_{f}\cup Z_{g}$ need not be one-to-one:
\begin{lemma}\label{symmetrizationreal}
Let $f$ be a regular function on a symmetric slice domain $\Omega$ and let $f^s$ be its symmetrization. Then for each $S=x+y\s \subset \Omega$ either $f^s$ vanishes identically on $S$ or it has no zeroes in $S$.
\end{lemma}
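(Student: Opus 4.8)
The plan is to exploit the fact, recalled just before the statement, that $f^s$ is slice preserving, together with the explicit formula $f^s_I(z) = F(z)\hat F(z) + G(z)\hat G(z)$ coming from a splitting $f_I = F + GJ$ with $F, G \colon \Omega_I \to L_I$ holomorphic. Since $f^s$ is slice preserving, for each $I \in \s$ it maps $\Omega_I$ into $L_I \cong \cc$, so its restriction $f^s_I$ is an honest holomorphic function of one complex variable on the planar domain $\Omega_I$. The key observation is that the value of a slice preserving regular function at a point $x+yI$ of the sphere $S = x + y\s$ determines its value at every other point $x+yK$ of $S$: indeed, writing $f^s = \ext(f^s_I)$ and using the Extension Lemma, one gets $f^s(x+yK) = \tfrac{1}{2}[f^s_I(x+yI) + f^s_I(x-yI)] + K\tfrac{I}{2}[f^s_I(x-yI) - f^s_I(x+yI)]$, and since $f^s_I$ is holomorphic with values in $L_I$ and $f^s$ is slice preserving, $f^s_I(x-yI) = \overline{f^s_I(x+yI)}$ (this is the relation $\hat F = F$ on the real axis extended; concretely $f^s_I(\bar z) = \overline{f^s_I(z)}$ because $f^s_I = F\hat F + G\hat G$ and $\widehat{(F\hat F + G\hat G)} = F\hat F + G\hat G$). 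Hence $f^s(x+yK) = \RRe(f^s_I(x+yI)) + K\,\IIm_I(f^s_I(x+yI))$ up to the natural identification, so $f^s(x+yK) = 0$ for some $K$ iff $f^s_I(x+yI) = 0$ iff $f^s(x+yK) = 0$ for all $K$.

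More carefully, I would argue as follows. Fix $S = x + y\s \subset \Omega$ with $y \neq 0$ (if $y = 0$ the sphere is a single real point and there is nothing to prove). Pick $I \in \s$ and a splitting $f_I = F + GJ$, so that $f^s_I(z) = F(z)\hat F(z) + G(z)\hat G(z)$ for $z \in \Omega_I$. The two points of $S_I = S \cap L_I$ are $z_0 = x + yI$ and $\bar z_0 = x - yI$. Since $\widehat{F \hat F} = \hat F F = F \hat F$ and similarly for $G$, we have $\widehat{f^s_I} = f^s_I$, i.e. $\overline{f^s_I(\bar z)} = f^s_I(z)$ for all $z \in \Omega_I$; in particular $f^s_I(\bar z_0) = \overline{f^s_I(z_0)}$, so $f^s_I(z_0) = 0$ if and only if $f^s_I(\bar z_0) = 0$. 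Now apply the Extension Lemma to $f^s_I$ (whose regular extension is $f^s$, by uniqueness of the extension): for every $K \in \s$,
$$
f^s(x+yK) = \frac{1}{2}\left[f^s_I(z_0) + f^s_I(\bar z_0)\right] + K\frac{I}{2}\left[f^s_I(\bar z_0) - f^s_I(z_0)\right].
$$
If $f^s_I(z_0) = 0$ then also $f^s_I(\bar z_0) = 0$, so the right-hand side vanishes for every $K$, i.e. $f^s$ vanishes identically on $S$. If instead $f^s_I(z_0) \neq 0$, then $f^s_I(\bar z_0) = \overline{f^s_I(z_0)} \neq 0$ as well, and the two quantities $\tfrac12[f^s_I(z_0) + f^s_I(\bar z_0)] = \RRe(f^s_I(z_0))$ and $\tfrac{I}{2}[f^s_I(\bar z_0) - f^s_I(z_0)]$ (a real multiple of $I$, equal to $-\IIm(f^s_I(z_0))$ written suitably) cannot both be zero, since that would force $f^s_I(z_0) = 0$; hence $f^s(x+yK) \neq 0$ for every $K \in \s$. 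This dichotomy is exactly the assertion.

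I do not expect a serious obstacle here; the statement is essentially a bookkeeping consequence of "slice preserving" plus the Extension Lemma. The one point that needs a little care is the identity $\overline{f^s_I(\bar z)} = f^s_I(z)$ on $\Omega_I$: it should be derived cleanly from $f^s_I = F\hat F + G\hat G$ and the elementary rules $\hat{\hat H} = H$ and $\widehat{HK} = \hat H \hat K$ for holomorphic $H, K \colon \Omega_I \to L_I$, rather than invoked vaguely. Equivalently, one may simply quote that $f^s$ is slice preserving and that a slice preserving regular function restricted to $L_I$ satisfies $f^s_I(\bar z) = \overline{f^s_I(z)}$ (it sends real points to real points and is holomorphic), which is the formulation already implicit in the excerpt. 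With that in hand the rest is the two-line computation above.
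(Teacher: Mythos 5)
Your proof is correct. The paper itself states Lemma \ref{symmetrizationreal} without proof, as a recalled result from the literature, and your argument is exactly the standard one: $f^s=\ext(f^s_I)$ is slice preserving with $\widehat{f^s_I}=f^s_I$, so writing $f^s_I(x+yI)=a+bI$ with $a,b\in\rr$ the representation formula gives $f^s(x+yK)=a+Kb$ for every $K\in\s$, which vanishes for one $K$ iff $a=b=0$ iff it vanishes for all $K$. (Only a cosmetic slip: the coefficient $\tfrac{I}{2}[f^s_I(\bar z_0)-f^s_I(z_0)]$ equals the real number $b$, not a real multiple of $I$; this does not affect the dichotomy.)
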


\noindent The \emph{regular reciprocal} $f^{-*}$ of a regular function $f$ defined on a symmetric slice domain $\Omega$  can now be defined in $\Omega\setminus Z_{f^s}$ as
\begin{equation}
f^{-*}=(f^{s})^{-1}f^c,
\end{equation}
where $Z_{f^s}$ denotes the zero set of the symmetrization $f^s$.
\begin{oss}\label{oss}{\rm
If $f$ is a regular function defined on a slice symmetric domain of $\hh$, then its regular reciprocal  $f^{-*}=(f^{s})^{-1}f^c$ has a sphere of poles at $Z_{f^s}$ and is a quasi regular function in the sense of \cite{singularities}.}
\end{oss}

\section{Ideals generated by two regular functions}

In this section we will prove that if $f_1$ and $f_2$ are two regular functions with no common zeroes on a symmetric  slice domain $\Omega$, then they generate the entire ring of regular functions on $\Omega$, i.e. there are two regular functions $h_1$ and $h_2$ on $\Omega$ such that $f_1*h_1+f_2*h_2=1$.
\vskip 0.3cm

We begin by proving a local version of this result for holomorphic functions (in the sense of Definition \ref{holomorphic}), following the approach used in the case of several complex variables. 
%that two regular functions having no common zeroes, restricted to each slice locally generate the ring of regular functions.

\begin{theorem} \label{teolocale}
%Let $q=x+yL\in\hh$ and 
Let $f_1, f_2$ be two functions, regular in a symmetric slice domain $\Omega$ without common zeroes. 
%$q$ and not simultaneously vanishing at $q$. 
Then, for any $I\in \s$,
% it is possible to find  $I\in \s$ 
%and a symmetric domain $W$ in $\Omega_I$ containing $x+yI$ and $x-yI$, 
the equation
\begin{equation}\label{localedue}
f_1*h_1+f_2*h_2=1.
\end{equation}
restricted to $\Omega_I$ 
has local holomorphic solutions $h_1, h_2$ at any point of $\Omega_I$.
\end{theorem}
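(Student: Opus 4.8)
The plan is to reduce the statement to the classical Bézout-type fact about local holomorphic functions of one complex variable. Fix $I\in\s$ and pick $J\in\s$ with $J\perp I$, so that by the Splitting Lemma we may write, on $\Omega_I$,
\[
(f_1)_I = F_1 + G_1 J, \qquad (f_2)_I = F_2 + G_2 J,
\]
with $F_1,G_1,F_2,G_2\colon\Omega_I\to L_I$ holomorphic. By Proposition \ref{trasf} and Corollary \ref{productzeros}, the hypothesis that $f_1,f_2$ have no common zeroes in $\Omega$ translates, via the transformation $q\mapsto f(q)^{-1}qf(q)$ which preserves spheres, into a statement about common zeroes of the holomorphic data on $\Omega_I$. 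The first step is therefore to make this translation precise: I would show that at each $z_0\in\Omega_I$, the four holomorphic functions $F_1,G_1,F_2,G_2$ cannot all vanish simultaneously at $z_0$, or more precisely that the pair of $*$-products $f_1*h_1$ and $f_2*h_2$, read through the splitting, cannot be forced to share a zero there. Concretely, $f_1(z_0)=0$ means $F_1(z_0)=G_1(z_0)=0$; if this happens then $f_2(z_0)\neq 0$, and conversely; and if $f_1(z_0)\neq 0$ then its only constraint is local invertibility, which is automatic.

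The second step is the local solution itself. Near a point $z_0\in\Omega_I$ there are two cases. If $f_1(z_0)\neq 0$, then $f_1$ is $*$-invertible in a neighbourhood of $z_0$ (its symmetrization $f_1^s$ is nonvanishing there by Lemma \ref{symmetrizationreal} and Remark, since a sphere of poles only occurs on $Z_{f_1^s}$), so we may simply take $h_1 = f_1^{-*}$ and $h_2 = 0$, which solves \eqref{localedue} locally. Symmetrically if $f_2(z_0)\neq 0$. Since $f_1,f_2$ have no common zero, these two cases cover every point of $\Omega_I$, and that already produces local holomorphic (indeed regular) solutions everywhere. This is the cleanest route and I would present it first; the role of the splitting/holomorphy language is then mostly to phrase the conclusion in the form needed for the subsequent globalization via coherence of $\mathcal O$.

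Alternatively, and perhaps this is what the statement is really after, one can argue entirely on $\Omega_I$ at the level of the holomorphic restrictions: write the desired identity $(f_1*h_1 + f_2*h_2)_I = 1$ using \eqref{prodottostar}, which expands into a pair of scalar equations in $L_I$ for the $L_I$-components of $h_1,h_2$. At a point where not all of $F_1,G_1,F_2,G_2$ vanish, at least one of these holomorphic coefficients is a unit in the local ring $\mathcal O_{z_0}$, and one solves the linear system for the components of $h_1,h_2$ by inverting that unit and setting the remaining free components to zero. One then invokes the Extension Lemma to promote the locally defined holomorphic $h_i$ on $\Omega_I$ to regular $h_i$ on (a symmetric neighbourhood in) $\Omega$.

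The main obstacle is bookkeeping rather than conceptual: one must be careful that the $*$-product mixes the components of $f_i$ and $h_i$ with the "hat" conjugation $\hat{(\cdot)}$, so the local equation is not literally the naive scalar Bézout identity, and one must check that at a common-zero-free point the coefficient matrix of the resulting linear system is genuinely invertible over the local ring $\mathcal O_{z_0}$ — equivalently that its "determinant" (which will be essentially a symmetrization-type expression $F_i\hat F_i + G_i\hat G_i$) is a unit there. This is exactly where the no-common-zeroes hypothesis, combined with Lemma \ref{symmetrizationreal} ensuring $f_i^s$ vanishes on a whole sphere or nowhere on it, gets used. Once that invertibility is in hand, the solution is explicit and the extension to a regular local solution is routine.
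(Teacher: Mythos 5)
There is a genuine gap, and it sits exactly at the point this paper was written to overcome. Your main route rests on the claim that if $f_1(z_0)\neq 0$ then $f_1$ is $*$-invertible near $z_0$ because $f_1^s$ is nonvanishing there ``by Lemma \ref{symmetrizationreal}''. That lemma gives you the opposite of what you need: $f_1^s$ vanishes on an \emph{entire} sphere $x+y\s$ as soon as $f_1$ has a single zero anywhere on that sphere. The hypothesis of Theorem \ref{teolocale} forbids common zeroes, but it does not forbid $f_1$ and $f_2$ from having distinct zeroes on the \emph{same} sphere. Take $f_1(q)=q-i$ and $f_2(q)=q-j$: they have no common zero, yet $f_1^s=f_2^s=q^2+1$ vanishes on all of $\s$, so at a point $z_0\in\Omega_I\cap\s$ neither $f_1^{-*}$ nor $f_2^{-*}$ exists in any neighbourhood of $z_0$, and the case split ``$f_1(z_0)\neq0$ or $f_2(z_0)\neq 0$'' buys you nothing. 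The same defect kills your alternative route: the ``determinant'' you would need to invert is precisely $F_\ell\hat F_\ell+G_\ell\hat G_\ell=(f_\ell^s)_I$, which vanishes at such a $z_0$ for both $\ell=1$ and $\ell=2$. (There is also a structural subtlety you gloss over: because of the hat conjugation, the split system is not a pointwise linear system over the local ring at $z_0$ --- it couples the values of the unknowns at $z$ and at $\bar z$ --- so even the reduction to ``invert a unit'' needs care.)

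The case where some $f_\ell^s(z_0)\neq 0$ is indeed easy, and is essentially the earlier result of \cite{sarfatti} recalled in the introduction, which assumed that the functions have no zeroes on the same spheres. The entire content of the paper's proof is the remaining case. There one writes the general solution of the first split equation using the syzygies of $(F_1,G_1,F_2,G_2)$ and shows that the resulting $4\times 12$ matrix $(A,-B)$ has rank $4$ at every point of $\Omega_I$: a rank drop would force the simultaneous vanishing of $f_1^s$, $f_2^s$, $f_1^c*f_2$ and $f_2^c*f_1$ at $z$ and at $\bar z$, which, via Proposition \ref{trasf}, manufactures a common zero of $f_1$ and $f_2$ on the sphere through $z$ --- a contradiction. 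You would need to supply an argument of this kind for the spheres on which both functions vanish; without it the proof does not go through.
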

\begin{proof}
%Let $q=x+yL\in \Omega$. Notice that, by hypothesis, $x+y\s$ cannot be a spherical zero for both $f_1$ and $f_2$.
% have an isolated  non common zero in $x+y\s$. %Since isolated zeroes of regular functions are a countable set, there exists $I\in\s$ such that $f_1,f_2,f_1^c,f_2^c$ do not vanish in a neighborhood of $x+y\s \cap L_I$.
By the Splitting Lemma, for any $I\in\s$, we can represent, for $\ell=1,2,$ the functions $f_\ell$ via functions holomorphic in $\Omega_I$ as
%a domain in $L_I$ containing $(x+y\s) \cap L_I$ as
$$
f_\ell(z)=f_{\ell|I}(z)=F_\ell(z)+G_\ell(z)J,
$$
where $J\in \s$ is orthogonal to $I$.
Similarly, the functions $h_\ell$ that we are looking for can be written as
$$
h_\ell(z)=h_{\ell|I}(z)=H_\ell(z)+K_\ell(z)J,
$$
for suitable holomorphic functions $H_\ell$ and $K_\ell$.
%Let us apply Lemma \ref{borsuk} to $f_1$, and choose $I$ such that, without loss of generality, $F_1(x+yI)F_1(x-yI)\neq 0$. As a consequence we get that also $\overline{F_1(x+yI)}\,\overline{F_1(x-yI)}\neq 0$,
%$f_1(x+yI)f_1(x-yI)\neq 0$ and $f_1^c(x+yI)f_1^c(x-yI)\neq 0$. Notice that the previous inequalities hold outside the discrete subset of $\Omega_I$ consisting of all zeroes of $F_1(z)$ and $\hat F_1(z)$, hence in a symmetric domain $U$ in $\Omega_I$ containing both $x+yI$ and $x-yI$.
Using \eqref{prodottostar}, it is immediate to see that \eqref{localedue} can be rewritten as a system of two equations for holomorphic functions in $L_I$, namely, omitting the variable $z$,
%\begin{equation}\label{sistemalocaledue}
%\left\{\begin{array}{l}
%F_1(z)H_1(z)-G_1(z)\overline{K_1({\bar z})}+F_2(z)H_2(z)-G_2(z)\overline{K_2({\bar z})}=1\\
%F_1(z)K_1(z)+G_1(z)\overline{H_1({\bar z})}+F_2(z)K_2(z)+G_2(z)\overline{H_2({\bar z})}=0.\end{array}\right.
%\end{equation}
%For simplicity of notation, in the future if $F(z)$ is a holomorphic function, we will write $\hat{F}(z):=\overline{F({\bar z})},$ so that \eqref{sistemalocaledue} can actually be rewritten as
\begin{equation}\label{sistemalocaledueconcappelli}
\left\{\begin{array}{l}
F_1H_1-G_1\hat K_1+F_2H_2-G_2\hat K_2=1\\
F_1K_1+G_1\hat H_1+F_2K_2+G_2\hat H_2=0.\end{array}\right.
\end{equation}
%The hypothesis that $f_1$ and $f_2$ do not simultaneously vanish in $q$ implies that the functions $F_1, G_1, F_2, G_2$ also do not simultaneously vanish in $q$. Therefore,
Since $f_1$ and $f_2$ do not have common zeroes in $\Omega_I \subset \Omega$, the same holds true for $F_1,G_1,F_2,G_2$.
%Since $F_1$ does not vanish on $U$, there exist 
Hence, a classical one complex variable result implies that there exist $H_1, K_1, H_2, K_2$, holomorphic in $\Omega_I$, which define a solution of the first equation of \eqref{sistemalocaledueconcappelli}.
%The existence of such a domain is guaranteed because the zeroes of $f_1$ and $f_2$ (and their conjugates) in $L_I$ form a discrete set.
In general, the functions $H_1, K_1, H_2, K_2$ will not define a solution of system \eqref{sistemalocaledueconcappelli}. However, one can modify the solution to the first equation by adding an element of the syzygies of $(F_1,G_1,F_2,G_2)$ and try to solve the system. Since the latter functions have no common zeroes on $\Omega_I$, their syzygies (see, e.g., \cite{libro daniele}) are generated by the columns of the following matrix
\begin{equation*}
A={\begin{pmatrix}
                      G_1 & F_2 & G_2 & 0 & 0 & 0 \\
                      -F_1 & 0 & 0 & F_2 & G_2 & 0 \\
                      0 & -F_1 & 0 & -G_1 & 0 & G_2 \\
                      0 & 0 & -F_1 & 0 & -G_1 & -F_2 \\
                    \end{pmatrix}.
}
\end{equation*}
Hence the general solution to the first equation of \eqref{sistemalocaledueconcappelli} is given by
\begin{equation}
\left(
\begin{array}{l}
H_1+\hat\beta_1G_1+\hat\beta_2F_2+\hat\beta_3G_2\\
-\hat K_1-\hat\beta_1F_1+\hat\beta_4F_2+\hat\beta_5G_2\\
H_2-\hat\beta_2F_1-\hat\beta_4G_1+\hat\beta_6G_2\\
-\hat K_2 -\hat\beta_3F_1-\hat\beta_5G_1-\hat\beta_6F_2
\end{array}
\right)
\end{equation}
 where $\beta_1,\ldots, \beta_6$ are arbitrary holomorphic functions in $\Omega_I$.
%which in compact form can be written as
%\begin{equation}
% H+A  \hat\beta,
%\end{equation}
%with $H= \ ^t(H_1,-\hat K_1, H_2,-\hat K_2)$.
Consider now the matrix $B$ of holomorphic functions defined by
\begin{equation}
B={\begin{pmatrix}
                      \hat F_1 & 0 & 0 & -\hat F_2 & -\hat G_2 & 0 \\
                      \hat G_1 & \hat F_2 & \hat G_2 & 0 & 0 & 0 \\
                      0 & 0 & \hat F_1 & 0 & \hat G_1 & \hat F_2 \\
                      0 & -\hat F_1 & 0 & -\hat G_1 & 0 & \hat G_2 \\
                    \end{pmatrix}.
}
\end{equation}
In order
%for $H+A\hat\beta$
to obtain a solution of \eqref{sistemalocaledueconcappelli} we now need to request that the vector
\begin{equation}\label{syzygies}
\left(
\begin{array}{l}
K_1+\beta_1\hat F_1-\beta_4\hat F_2-\beta_5\hat G_2\\
\hat H_1+\beta_1\hat G_1+\beta_2 \hat F_2+\beta_3\hat G_2\\
K_2 +\beta_3\hat F_1+\beta_5\hat G_1+\beta_6\hat F_2\\
\hat H_2-\beta_2\hat F_1-\beta_4\hat G_1+\beta_6\hat G_2\\
\end{array}
\right)
\end{equation}
belongs to the syzygies of $(F_1,G_1,F_2,G_2)$. That is,  setting $H=\ ^t(K_1,\hat H_1, K_2, \hat H_2)$, we need to find $\beta=\ ^t(\beta_1,\ldots, \beta_6)$ and $\alpha=\ ^t(\alpha_1,\ldots, \alpha_6)$ vectors of holomorphic functions such that
\[H+B\beta=A\alpha,\]
namely such that
\begin{equation}\label{capelli}
\big(\begin{array}{c  c}
 A  , & -B
 \end{array}
 \big)
 \left(\begin{array}{l}
 \alpha \\ \beta
 \end{array}
  \right)
   =H.
 \end{equation}
Our next goal is to establish that the rank of the $(4\times 12)$-matrix $(A, -B)$ equals $4$ on the entire $\Omega_I$. 
%Recalling that $F_1$ and $\hat F_1$ do not vanish in $U$, 
Since $F_1,G_1,F_2,G_2$ have no common zeroes, it is easy to prove that both $A$ and $B$ have rank $3$ at each point $z\in \Omega_I$. Consider for instance $A$ and denote by $A^1,\ldots, A^6$ its columns. 
If $F_1(z)\neq 0$, then $\{A^1,A^2,A^3\}$ is a maximal subset of linearly independent columns  on a neighborhood of $z$. If $F_1(z)=0$ and $G_1(z)\neq 0$, we can take as a maximal subset of linearly independent columns $\{A^1,A^4,A^5\}$. If both $F_1(z)$ and $G_1(z)$ vanish, we proceed analogously considering $F_2$ and $G_2$.  
%in both matrices the first three columns are a maximal subset of linearly independent columns on $U$.
%Denote by $A^1,\ldots, A^6$ the columns of $A$ and by $B^1,\ldots,B^6$ the columns of $B$. 
The rank of $(A, -B)$ is not maximum at a point $z\in \Omega_I$ if and only if all columns of $B$ are linear combinations of columns of $A$, which is equivalent to the condition that all columns of $B$ belong to the syzygies of $(F_1,G_1,F_2,G_2)$.
%all the determinants of the six $(4 \times 4)$-matrices
%\[ M_1=(A^1,A^2,A^3,B^1), \ \ldots \ ,\  M_6=(A^1,A^2,A^3,B^6)\]
% vanish at $z$. 
 Hence the rank of $(A,-B)$ is $3$ where (in $\Omega_I$) the following
 six equations are simultaneously satisfied:
%\begin{equation}\label{sistema}
%\left\{
%\begin{array}{l}
% F_1^2(F_1\hat F_1+G_1\hat G_1 )=0\\
% F_1^2(F_1\hat F_2+G_2\hat G_1 )=0\\
% F_1^2(F_1\hat G_2-F_2\hat G_1 )=0\\
% F_1^2(G_1\hat F_2-G_2\hat F_1 )=0\\
% F_1^2(F_2\hat F_1+G_1\hat G_2 )=0\\
% F_1^2(F_2\hat F_2+G_2\hat G_2 )=0\\
%\end{array}
%\right.
%%
%\end{equation}
%%composed by the first three columns of $A$ and one column of $-B$
%Taking into account that $F_1$ is nonvanishing in $U$, the rank equals $3$ if and only if the following
%six equations are contemporarily satisfied:
\begin{align}
&F_1\hat F_1+G_1\hat G_1=0  \label{1}\\
&F_1\hat F_2+G_2\hat G_1 =0 \label{2}\\
&F_1\hat G_2-F_2\hat G_1 =0 \label{3}\\
&G_1\hat F_2-G_2\hat F_1 =0 \label{4}\\
&F_2\hat F_1+G_1\hat G_2 =0 \label{5}\\
&F_2\hat F_2+G_2\hat G_2 =0 \label{6}
\end{align}
Equations \eqref{1} and \eqref{6} can be written in $\Omega_I$  as the quaternionic  equations
%\begin{equation}
%\left\{
%\begin{array}{l}
$f_1^s(z)=0$ and
$f_2^s(z)=0$.
%\end{array}
%\right.
%
%\end{equation}
We will now investigate the meaning of the other terms. Using \eqref{prodottostar} and the fact that $\Omega_I$ is symmetric
(i.e. if it contains $z$ then it contains $\bar z$ as well), we get
\[(f_1^c*f_2)_I(z)=(F_2(z)\hat F_1(z)+G_1(z)\hat G_2(z))-(G_1(z)\hat F_2(z)-G_2(z)\hat F_1(z))J\]
\[(f_2^c*f_1)_I(z)=(F_1(z)\hat F_2(z)+G_2(z)\hat G_1(z))+(G_1(z)\hat F_2(z)-G_2(z)\hat F_1(z))J\]
\[(f_1^c*f_2)_I(\bar z)=\overline{(F_1(z)\hat F_2(z)+G_2(z)\hat G_1(z))}+\overline{(F_1(z)\hat G_2(z)-F_2(z)\hat G_1(z))}J\]
\[(f_2^c*f_1)_I(\bar z)=\overline{(F_2(z)\hat F_1(z)+G_1(z)\hat G_2(z))}+(F_1(z)\hat G_2(z)-F_2(z)\hat G_1(z))J.\]
Hence if the matrix $(A,-B)$ has rank $3$ at $z\in \Omega_I$, then equations \eqref{2}-\eqref{5} imply that $(f_1^c*f_2)_I(z)=(f_2^c*f_1)_I(z)=(f_1^c*f_2)_I(\bar z)=(f_2^c*f_1)_I(\bar z)=0$. Consequently if $(A, -B)$ has rank $3$ at $z\in U$, then we have
%\begin{eqnarray}
%\label{sistemacompatto}
%\left\{
 \begin{align}
 f_1^s(z)=0 \label{i}\\
 f_1^c*f_2(z)=0 \label{ii}\\
 f_2^c*f_1(z)=0 \label{iii}\\
 f_1^c*f_2(\bar z)=0 \label{iv}\\
 f_2^c*f_1(\bar z)=0\label{v}\\
f_2^s(z)=0 \label{vi}
 \end{align}
% \right.
% \end{equation}
Let $z=x+yI$. From equations \eqref{i} and \eqref{vi} we obtain that both $f_1$ and $f_2$ have a (non common and hence non spherical) zero in the sphere $x+y\s$. 
%Suppose first that $f_1(z)\neq 0$. 
Equation \eqref{i} can be written as
\[
%f_1^c(z)f_1((f_1^c(z))^{-1}zf_1^c(z))=0.
f_1^c*f_1(z)=0
\]
which, by Proposition \ref{trasf} leads to two possibilities: %(1) $f_1^c(z)=0$, or (2) $f_1^c(z)\neq 0$ and $f_1((f_1^c(z))^{-1}zf_1^c(z))=0$.
\begin{enumerate}
\item[(a)]  $f_1^c(z)=0$ or
\item[(b)] $f_1^c(z)\neq 0$ and $f_1((f_1^c(z))^{-1}zf_1^c(z))=0$.
\end{enumerate}
In case (a), we have that $f_1^c(\bar z)\neq 0$, since $x+y\s$ is not a spherical zero of ($f_1$ and hence of) $f_1^c$. Thanks to  Proposition \ref{trasf}, equation \eqref{iv} becomes
\[f_1^c(\bar z)f_2((f_1^c(\bar z))^{-1}\bar z f_1^c(\bar z))=0,\]
which implies that 
\begin{equation}\label{primozero}
f_2((f_1^c(\bar z))^{-1}\bar z f_1^c(\bar z))=0.
\end{equation} 
Moreover \eqref{i} yields that $x+y\s$ is a spherical zero of $f_1^s$, and hence that
%$f_1^s(\bar z)=0$ and hence that
\[0=f_1^s(\bar z)=f_1^c(\bar z)f_1((f_1^c(\bar z))^{-1}\bar z f_1^c(\bar z)),\]
%Since $x+y\s$ is not a spherical zero of $f_1$, $f_1^c(\bar z)\neq 0$ and 
leading to 
\begin{equation}\label{secondozero}
f_1((f_1^c(\bar z))^{-1}\bar z f_1^c(\bar z))=0.
\end{equation} 
The hypothesis that $f_1$ and $f_2$ have no common zeroes together with \eqref{primozero} and \eqref{secondozero} gives us a contradiction.\\
In  case (b), again thanks to Proposition \ref{trasf}, equation \eqref{ii} 
$$
f_1^c(z)f_2((f_1^c(z))^{-1}zf_1^c(z))=0
$$
yields that $f_2$ vanishes at $(f_1^c(z))^{-1}zf_1^c(z)$ which is a zero of $f_1$. Again a contradiction.
In conclusion, equations \eqref{i}-\eqref{vi} (and hence equations \eqref{1}-\eqref{6}) are never simultaneously satisfied, that implies that the matrix $(A, -B)$ has rank $4$ at all points of $\Omega_I$. 
%at first the case in which $f_1$  has a zero in $x+y\s$. If $q=x+yL$, let $Z=\{ x_n+y_n\s \}_{n\in\nn} \subset \Omega$ be the set of spheres, different from $x+y\s$, containing zeroes of $f_1^c*f_2$. Then
%$V=U \setminus Z$ is a symmetric domain in $L_I$ containing $x+yI$ (and $x-yI$) on which $f_1^c*f_2$ never vanishes.
%, with the only possible exception of $f(x-yI)=0$.
%To prove this fact,  %consider at first the case in which $f_1$  has a zero in $x+y\s$.
%thanks to Lemma \ref{symmetrizationreal} and to \eqref{trasf}, we obtain
%\[0=f^s(x+yI)= f_1^c*f_1(x+yI)=f_1^c(x+yI)f_1(f_1^c(x+yI)^{-1}(x+yI)f_1^c(x+yI)).\]
%As we pointed out  $f_1^c(x+yI)\neq 0$, and therefore
%\[f_1(f_1^c(x+yI)^{-1}(x+yI)f_1^c(x+yI))=0.\]
%Since, by hypothesis,  $f_2$ cannot vanish at a zero of $f_1$, we get
%\[f_1^c*f_2(x+yI)= f_1^c(x+yI)f_2(f_1^c(x+yI)^{-1}(x+yI)f_1^c(x+yI))\neq 0.\]
%An analogous computation shows that $f_1^c*f_2(x-yI)\neq 0$, thus proving our assertion concerning $V$. As a consequence
%one of the holomorphic determinants appearing in \eqref{sistema} (namely one of those related to \eqref{4} or \eqref{5})
%cannot vanish at $x+yI$ and one cannot vanish at $x-yI$: let us call this functions $N_1$ and $N_2$.  Let $Z_N\subset \Omega_I$ be the set of common zeroes of  $N_1$ and $N_2$. Then $W=V \setminus Z_N$ is a symmetric domain in $\Omega_I$ containing (and hence a neighborhood of) $x+yI$ and $x-yI$. Since for every point  $p$ of $W$ one of the holomorphic determinants $N_1(z), N_2(z)$  is non vanishing at $p\in W$, 
Therefore, using the classical Rouch\'e - Capelli method it is now possible to find a local holomorphic solution $(\alpha, \beta)$
%$H=\ ^t(K_1,\hat H_1, K_2, \hat H_2)$ 
of system \eqref{capelli} in the neighborhood of each point $z\in \Omega_I$. This gives us a local holomorphic solution of system \eqref{sistemalocaledueconcappelli} and hence of equation \eqref{localedue}.
%a neighborhood $A_p\subseteq W\subseteq \Omega_I$ of $p$.  %on the entire domain $W\subseteq L_I$.
%This solution is such that the holomorphic functions $h_1(z)=H_1(z)+K_1(z)J$ and $h_2(z)=H_2(z)+K_2(z)J$ are local holomorphic solutions of the restriction of equation \eqref{localedue} to $A_p$, i.e., it is such that
%$$
%f_1*h_1(z)+f_2*h_2(z) = 1
%$$
%for all $z\in A_p$. %(with $x+yI \in W$).
%%To conclude the proof we need to use the Extension Lemma \ref{extensionlemma} to extend $h_1$ and $h_2$ to the slice symmetric open neighborhood $\Sigma\subseteq \Omega$ of $q$ defined by
%%$$
%%\Sigma= \bigcup_{x+yI \in W} (x+y\s).
%%$$
%In the remaining case in which $f_1$ has no zeroes in $x+y\s$, then $f^s_1=F_1\hat F_1+G_1\hat G_1\neq 0$ at both $x+yI$ and $x-yI\in x+y\s$. Setting $N=F_1^2f^s_1= F_1^2(F_1\hat F_1+G_1\hat G_1 )$ we proceed exactly as in the previous case and conclude.
%%Recalling that $f_1^c$ never vanishes in $U$, we get that the second equation in \eqref{sistema} can be written as (see \ref{trasf})
%%\[ f_1^c*f_2(z)=f_1^c(z)f_2(f_1^c(z)^{-1}zf_1^c(z))=0,\]
%%and hance it is satisfied if and only if $f_2(f_1^c(z)^{-1}zf_1^c(z))=0$.
%%If $Z=\{ x_n+y_n\s \}_{n\in\nn} \subset \Omega$ are the spheres containing the common zeros of $f_1$ and $f_2$, then $V=U \setminus Z$ is a symmetric domain in $L_I$ containing $q$.
%%Let us restrict our attention to the domain $V\subseteq L_I$. Notice that for all $z\in V$,
%%\[0=f_1^c*f_1(z)\]
%%
%%Since for all $z\in V$, $f_1^c(z)^{-1}zf_1^c(z)$ is a zero of $f_1$
\end{proof}

To find a global solution of \eqref{localedue} on $\Omega_I$ % identifying a solution of \eqref{localedue} in the entire domain $\Omega\subseteq \hh$, 
we will apply results from the theory of  analytic sheaves. More precisely we will use the following consequence of Cartan Theorem B, see \cite{krantz}.
\begin{theorem}\label{fasci}
Let $D\subseteq \C^n$ be a pseudo convex domain, and let $(\mathcal F, D)$ be a coherent analytic sheaf. Suppose that there exist finitely many global sections $s_1,\ldots s_k \in \Gamma(D, \mathcal F)$ such
that  $(s_1)_z,\ldots, (s_k)_z$ generate the stalk ${\mathcal F}_z$ over each $z\in D$. Then for any global section $g\in \Gamma(D,\mathcal F)$, there
exist $g_1,\dots , g_k \in \Gamma(D,\mathcal O)$ holomorphic functions on $D$ such that $g= s_1g_1 + \cdots + s_kg_k$.
\end{theorem}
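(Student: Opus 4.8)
The plan is to realize the statement as a standard consequence of Cartan's Theorem~B through a short exact sequence of coherent sheaves. First I would use the generating sections to build a sheaf epimorphism: define the morphism of $\mathcal{O}$-modules
\[
\varphi : \mathcal{O}^k \longrightarrow \mathcal{F}, \qquad \varphi(g_1,\ldots,g_k) = s_1 g_1 + \cdots + s_k g_k.
\]
The hypothesis that $(s_1)_z,\ldots,(s_k)_z$ generate the stalk $\mathcal{F}_z$ for every $z\in D$ says exactly that $\varphi$ is surjective on every stalk, hence surjective as a morphism of sheaves. Letting $\mathcal{R}=\ker\varphi$ denote the sheaf of relations (syzygies) among $s_1,\ldots,s_k$, we obtain the short exact sequence
\[
0 \longrightarrow \mathcal{R} \longrightarrow \mathcal{O}^k \stackrel{\varphi}{\longrightarrow} \mathcal{F} \longrightarrow 0.
\]

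Next I would invoke coherence. The structure sheaf $\mathcal{O}$ is coherent by Oka's theorem, so $\mathcal{O}^k$ is coherent, and $\mathcal{F}$ is coherent by hypothesis. Since the kernel of a morphism between coherent analytic sheaves is again coherent, $\mathcal{R}$ is coherent; this is the point that makes the cohomological machinery applicable.

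The core of the argument is then the long exact cohomology sequence associated to the short exact sequence above, of which I only need the segment
\[
\Gamma(D,\mathcal{O}^k) \stackrel{\varphi_*}{\longrightarrow} \Gamma(D,\mathcal{F}) \longrightarrow H^1(D,\mathcal{R}).
\]
Because $D$ is pseudoconvex (equivalently a domain of holomorphy) and $\mathcal{R}$ is coherent, Cartan's Theorem~B yields $H^1(D,\mathcal{R})=0$. Hence $\varphi_*\colon \Gamma(D,\mathcal{O}^k)\to\Gamma(D,\mathcal{F})$ is surjective, which is precisely the assertion: every global section $g\in\Gamma(D,\mathcal{F})$ can be written as $g=s_1 g_1+\cdots+s_k g_k$ with $g_i\in\Gamma(D,\mathcal{O})$.

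The only genuine obstacle is the vanishing $H^1(D,\mathcal{R})=0$, which is exactly the content of Cartan's Theorem~B on pseudoconvex domains and is the result we are permitted to cite (see \cite{krantz}); everything else is the formal identification of stalk-surjectivity with sheaf-surjectivity together with a diagram chase. I would note that no property of $\mathcal{F}$ beyond coherence and global generation is used, and that the same proof goes through verbatim with $D$ replaced by any Stein manifold.
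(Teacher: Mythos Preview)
Your argument is correct and is precisely the standard derivation of this statement from Cartan's Theorem~B. The paper itself does not supply a proof: it simply states the result as a known consequence of Cartan's Theorem~B and refers to \cite{krantz}, so there is nothing to compare beyond noting that your short exact sequence argument is exactly the proof one would find in such a reference.
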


In our setting the sheaf $(\mathcal F, D)$ will be the coherent sheaf $(\mathcal O^4, \Omega_I)$ of $4$-tuples of germs of holomorphic functions on $\Omega_I$.

\begin{theorem}\label{teoglobale}
Let $f_1$, $f_2$ be regular functions on a symmetric slice domain $\Omega\subseteq\hh$, with no common zeroes in $\Omega$. Then there exist $h_1$ and $h_2$ regular functions  on $\Omega$ such that
\begin{equation}\label{localeduebis}
f_1*h_1+f_2*h_2=1
\end{equation}
on $\Omega$.
\end{theorem}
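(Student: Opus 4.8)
The plan is to reduce the global quaternionic statement to the global holomorphic statement on a single slice $\Omega_I$, and then invoke Theorem \ref{fasci} (Cartan B). First I would fix an arbitrary $I\in\s$ and a unit $J\perp I$, and rewrite, exactly as in the proof of Theorem \ref{teolocale}, the equation $f_1*h_1+f_2*h_2=1$ restricted to $\Omega_I$ as the linear system \eqref{sistemalocaledueconcappelli} in the holomorphic unknowns $H_1,K_1,H_2,K_2$. The content of Theorem \ref{teolocale} is precisely that this system admits a \emph{local} holomorphic solution near every point of $\Omega_I$; equivalently, writing the system in the matrix form \eqref{capelli}, the augmented matrix $(A,-B)$ has constant rank $4$ on $\Omega_I$, so the sheaf map $\mathcal O_{\Omega_I}^{12}\xrightarrow{(A,-B)}\mathcal O_{\Omega_I}^4$ is \emph{surjective} as a map of sheaves.

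The key step is then the observation that $\Omega_I$ is (biholomorphic to) a planar domain, hence pseudoconvex, and that the image sheaf $\mathcal F:=\operatorname{Im}(A,-B)$ is a coherent subsheaf of $\mathcal O_{\Omega_I}^4$ (the image of a morphism of coherent sheaves is coherent, by Oka's coherence theorem). By the surjectivity just established, $\mathcal F$ is in fact all of $\mathcal O_{\Omega_I}^4$, and the $12$ global sections given by the columns of $(A,-B)$ generate the stalk $\mathcal O_{\Omega_I,z}^4$ at each $z$. Theorem \ref{fasci}, applied with $D=\Omega_I\cong$ a planar (hence pseudoconvex) domain, $\mathcal F=\mathcal O^4$, and with the global section $g=H={}^t(K_1,\hat H_1,K_2,\hat H_2)$ read off from any local/particular solution of the first equation of \eqref{sistemalocaledueconcappelli} (whose existence is the classical corona-type statement for holomorphic functions without common zeros on a planar domain, cf.\ the proof of Theorem \ref{teolocale}), then produces \emph{global} holomorphic functions $\alpha,\beta$ on $\Omega_I$ solving \eqref{capelli}. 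Unwinding, this yields global holomorphic $H_1,K_1,H_2,K_2$ on $\Omega_I$ satisfying \eqref{sistemalocaledueconcappelli}, i.e.\ global holomorphic functions $h_{1|I}=H_1+K_1J$, $h_{2|I}=H_2+K_2J$ on $\Omega_I$ with $f_{1|I}*h_{1|I}+f_{2|I}*h_{2|I}=1$.

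Finally I would pass from the slice $\Omega_I$ back to all of $\Omega$ using the Extension Lemma \ref{extensionlemma}: set $h_\ell:=\ext(h_{\ell|I})$ for $\ell=1,2$. Then $h_1,h_2$ are regular on $\Omega$, and the regular function $f_1*h_1+f_2*h_2-1$ restricts on $\Omega_I$ to the zero function; since $\Omega_I\cap Z_{f_1*h_1+f_2*h_2-1}$ contains a nonempty open subset of $\Omega_I$ (in particular has an accumulation point), the Identity Principle \ref{Id} forces $f_1*h_1+f_2*h_2=1$ on all of $\Omega$.

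The main obstacle — and the only genuinely delicate point — is not in this last argument but in justifying the hypotheses of Theorem \ref{fasci}: namely that $\Omega_I$ is pseudoconvex (immediate, since it is an open subset of $L_I\cong\C$, and every open subset of $\C$ is a domain of holomorphy once one works on its connected components, which is fine because $\Omega$ is a \emph{slice} domain so $\Omega_I$ is connected) and that the sections generate the stalks at \emph{every} point, which is exactly the constant-rank-$4$ conclusion of Theorem \ref{teolocale}. One must also make sure that the ``particular solution'' $H$ one feeds into Cartan B is a genuine global holomorphic section of $\mathcal O^4$ on $\Omega_I$ — here one again appeals to the classical one-variable result guaranteeing a global holomorphic solution of the first scalar equation of \eqref{sistemalocaledueconcappelli} on the planar domain $\Omega_I$, exactly as in the proof of Theorem \ref{teolocale}. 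Modulo these coherence/pseudoconvexity bookkeeping points, the proof is a clean reduction-and-extension argument.
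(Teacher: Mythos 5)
Your proposal is correct and follows essentially the same route as the paper: restrict to a slice $\Omega_I$, use the constant rank $4$ of $(A,-B)$ established in Theorem \ref{teolocale} to see that its columns generate each stalk of $\mathcal O^4$, apply Theorem \ref{fasci} to the global section $H$ coming from a global holomorphic solution of the first scalar equation, and then extend by the Extension Lemma. The extra bookkeeping you supply (pseudoconvexity of $\Omega_I$, globality of $H$, and the Identity Principle to propagate the identity from $\Omega_I$ to $\Omega$) is exactly what the paper leaves implicit, so nothing is missing.
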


\begin{proof}
Fix $I\in \s$ and, with the notation of the proof of Theorem \ref{teolocale}, consider the linear system
\begin{equation}\label{capellibis}
\big(\begin{array}{c  c}
 A  , & -B
 \end{array}
 \big)
 \left(\begin{array}{l}
 \alpha \\ \beta
 \end{array}
  \right)
   =H
 \end{equation}
associated with equation \eqref{localeduebis} restricted to $\Omega_I$. 
%let $(\alpha,\beta)$ be a local holomorphic solution of system \eqref{capelli}, corresponding to equation \eqref{localeduebis}, on $\Omega_I$. 
In the language of analytic sheaves, the proof of Theorem \ref{teolocale} read as follows:
%let $(\mathcal{O},\Omega_I)$ be the sheaf of germs of holomorphic functions  
consider the coherent analytic sheaf $(\mathcal O ^4, \Omega_I)$ of $4$-tuples of germs of holomorphic functions on the pseudoconvex domain $\Omega_I\subseteq L_I\simeq \C$. The fact that the matrix $(A,-B)$ appearing in equation \eqref{capellibis} has rank $4$ at all point $z\in \Omega_I$ means that the twelve columns $\{A^1,\ldots,A^6,B^1,\ldots, B^6\}$ generate the  stalk $\mathcal O^4_z$ of $(\mathcal O^4, \Omega_I)$ at any $z\in \Omega_I$. Theorem \ref{fasci} implies then that for any $4$-tuple $ k \in \Gamma( \Omega_I, \mathcal O^4)$ of holomorphic functions on $\Omega_I$, there exist twelve holomorphic functions $g_1,\ldots,g_{12}\in \Gamma(\Omega_I, \mathcal O)$ such that $k=g_1A^1+\cdots+g_6A^6+g_7B^1+\cdots+g_{12}B^6$. In particular, setting $k=H$ we obtain a global solution of \eqref{capellibis} and therefore a global solution  $h_1,h_2$ of equation \eqref{localeduebis} on $\Omega_I$.
%By Theorem \ref{teolocale} 
%there exists an open covering $\mathcal{U}= \{U_t \}_{t \in T}$ of $\Omega$ whose elements $U_t$ are symmetric slice domains, such that the equation
%\begin{equation}\label{globale}
%f_1*h_1+f_2*h_2=1
%\end{equation}
%has a solution on each of them.
%Consider now, for an arbitrary $J\in\s$, the slice $\Omega_J$ of $\Omega$. With the same notation used in the proof of Theorem \ref{teolocale}, equation \eqref{globale} induces by restriction a local holomorphic solution to the complex system
%\begin{equation}\label{sistemaglobale}
%\left\{\begin{array}{l}
%F_1H_1-G_1\hat K_1+F_2H_2-G_2\hat K_2=1\\
%F_1K_1+G_1\hat H_1+F_2K_2+G_2\hat H_2=0.\end{array}\right.
%\end{equation}
%on $\Omega_J$. This local solution on $\Omega_J$ is generated, via the Extension Lemma (as pointed out in the proof of Theorem \ref{teolocalebiss}), by a germ of local solution (locally defined on some slice $\Omega_I$) which belongs to $\ff^8$.
%%As pointed out in the proof of Lemma \ref{coherent}, such a solution can be represented as an element of $\ff^8$.
%Thanks to the Extension Lemma and Lemma \ref{coherent}, the sheaf $\mathcal K$ of local syzygies of $f_1, f_2$ restricted to $\Omega_J$ is coherent, and then classical arguments  (see \cite{gunningrossi}) show that system 
%\eqref{sistemaglobale} has a global holomorphic solution on $\Omega_J$.
%This is equivalent to the existence of globally defined holomorphic functions $h_1,h_2$ on $\Omega_J$ such that
%\[f_1*h_1+f_2*h_2=1\]
%on $\Omega_J$.
To conclude, applying the Extension Lemma \ref{extensionlemma}, we uniquely extend the functions $h_1,h_2$  to $\Omega$ as regular functions that satisfy
\[f_1*h_1+f_2*h_2=1\]
everywhere on $\Omega$.
\end{proof}

\section{Ideals of regular functions}

In this section we show how the proof of Theorem \ref{teoglobale} can be extended to the case of $n(\ge 2)$
regular functions with no common zeroes.

\begin{lemma}\label{ranghi}
%Let $q=x+yL\in \hh$ and 
Let $f_1, \ldots, f_n$  be $n$ regular functions in a slice symmetric domain $\Omega$ without common zeroes.
Then for any $I\in\s$ 
%and a symmetric domain $U\subseteq \Omega_I$, containing $x+yI$, such that, 
if $f_{\ell}=F_{\ell}+G_{\ell}J$ is the splitting of $f_{\ell}$ on $\Omega_I$,  for $\ell=1,\ldots,n$, then:
\begin{enumerate}
\item the rank of the $\left(2n\times {2n \choose 2}\right)$-matrix $A$ whose columns are the standard generators of the syzygies of the vector $(F_1,G_1,\ldots, F_n,G_n)$ equals $2n-1$ on $\Omega_I$;
\item the rank of  the $\left(2n\times {2n \choose 2}\right)$-matrix $B$ whose columns are the standard generators of the syzygies of the vector $(-\hat G_1,\hat F_1,\ldots, -\hat G_n,\hat F_n)$ equals $2n-1$ on $\Omega_I$;
\item the rank of the $\left(2n \times 2{2n \choose 2}\right)$-matrix $(A,-B)$ equals $2n$ on $\Omega_I$.
\end{enumerate}
\end{lemma}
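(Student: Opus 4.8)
The plan is to generalize, almost verbatim, the rank computations carried out in the proof of Theorem \ref{teolocale} (the case $n=2$), while isolating the combinatorial structure of the syzygies of a vector with no common zeros. For part (1), I would first recall the standard description of the syzygy module of a sequence $(u_1,\dots,u_m)$ of holomorphic functions with no common zero on a one-dimensional (hence Stein) domain: it is generated by the Koszul relations $u_a e_b - u_b e_a$ for $a<b$, so here $m = 2n$ and $A$ is the $(2n \times \binom{2n}{2})$ matrix of these Koszul columns with $(u_1,\dots,u_{2n}) = (F_1,G_1,\dots,F_n,G_n)$. To see that the rank of $A$ is exactly $2n-1$ at every $z \in \Omega_I$, note first that it cannot exceed $2n-1$ because every Koszul column is orthogonal (in the bilinear, not Hermitian, sense) to the vector $(u_1,\dots,u_{2n})(z)$, which is nonzero precisely because $f_1,\dots,f_n$ have no common zero (so not all of $F_1,G_1,\dots,F_n,G_n$ vanish at $z$). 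For the reverse inequality, I would exhibit $2n-1$ linearly independent columns explicitly: pick an index $a_0$ with $u_{a_0}(z) \neq 0$; then the $2n-1$ columns $\{u_{a_0} e_b - u_b e_{a_0} : b \neq a_0\}$ are linearly independent near $z$, since their $e_b$-components ($b \neq a_0$) form, up to the nonvanishing scalar $u_{a_0}(z)$, the identity block. This is the exact analogue of the case distinction in the $n=2$ proof (``if $F_1(z) \neq 0$ take $\{A^1,A^2,A^3\}$, etc.'').

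Part (2) is identical in form: $(-\hat G_1, \hat F_1, \dots, -\hat G_n, \hat F_n)$ also has no common zero on $\Omega_I$ — indeed $-\hat G_\ell(z)$ and $\hat F_\ell(z)$ vanish simultaneously iff $G_\ell(\bar z) = F_\ell(\bar z) = 0$, i.e.\ iff $f_\ell(\bar z) = 0$, and since $\Omega_I$ is symmetric and the $f_\ell$ have no common zero, the vector is nowhere zero — so the same argument gives rank $2n-1$. (One should remark that the entries of $B$ are, up to the harmless sign pattern, the $\widehat{\ \cdot\ }$ of an entrywise reordering of the entries of $A$; this is the bookkeeping that makes the $n=2$ matrix $B$ in the excerpt look the way it does.)

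Part (3) is the substantive claim and the main obstacle. The rank of $(A,-B)$ is at most $2n$ trivially; it fails to be $2n$ at $z$ iff the column spaces of $A$ and $B$ at $z$ coincide (both being $(2n-1)$-dimensional with, generically, distinct annihilating vectors $(u_1,\dots,u_{2n})(z)$ versus $(-\hat G_1,\hat F_1,\dots)(z)$), equivalently iff every column of $B$ is a syzygy of $(F_1,G_1,\dots,F_n,G_n)$. Writing out that every Koszul column of the second vector annihilates $(F_1,G_1,\dots,F_n,G_n)$ under the pairing dual to $A$ produces a finite system of quadratic equations in $F_\ell, G_\ell, \hat F_\ell, \hat G_\ell$; the diagonal ones read $F_\ell \hat F_\ell + G_\ell \hat G_\ell = 0$, i.e.\ $f_\ell^s(z) = 0$ for every $\ell$, and the off-diagonal ones say $f_a^c * f_b$ and $f_b^c * f_a$ vanish at $z$ and at $\bar z$ for all $a \neq b$. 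I would then run the argument of Theorem \ref{teolocale} pairwise: $f_\ell^s(z) = 0$ forces $x + y\s$ (with $z = x+yI$) to be a non-spherical zero sphere of each $f_\ell$, so each $f_\ell$ has a genuine zero on it; using Proposition \ref{trasf} and Corollary \ref{productzeros} exactly as in the $n=2$ case, the vanishing of $f_1^c * f_2$ at $z$ or $\bar z$ together with $f_1^s(z) = f_1^s(\bar z) = 0$ forces a common zero of $f_1$ and $f_2$ on that sphere, contradicting the hypothesis. Hence the system has no solution in $\Omega_I$ and the rank of $(A,-B)$ is $2n$ everywhere. The only real work beyond the $n=2$ case is the clean combinatorial verification that ``column of $B$ lies in $\mathrm{syz}(F_1,G_1,\dots,F_n,G_n)$ for all columns'' unpacks to precisely the collection of pairwise equations \eqref{1}–\eqref{6} (one such sextuple for each unordered pair $\{a,b\}$, plus the diagonal equations $f_\ell^s = 0$), so that the contradiction can be derived two functions at a time; I expect this indexing to be the fiddly-but-routine heart of the proof.
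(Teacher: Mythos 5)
Your proposal is correct and follows essentially the same route as the paper: parts (1) and (2) by exhibiting $2n-1$ independent Koszul columns at each point (your upper bound via orthogonality of every column to the nowhere-vanishing vector $(F_1,G_1,\ldots,F_n,G_n)$ is a slightly cleaner packaging of the paper's ``row of zeros'' observation), and part (3) by unpacking rank deficiency into the system $f_\sigma^s(z)=0$, $f_\gamma^c*f_\delta(z)=f_\gamma^c*f_\delta(\bar z)=0$ and deriving the contradiction pairwise exactly as in Theorem \ref{teolocale}. No gaps beyond the combinatorial bookkeeping you already flag as routine.
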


\begin{proof}
%By hypothesis there exists $\ell$ such that $f_{\ell}$ does not have a spherical zero $x+y\s$ containing $q=x+yL$. We can suppose $\ell=1$. Thanks to Lemma \ref{borsuk}, as in the proof of Theorem \ref{teolocale}, we can find $I\in\s$ such that, without loss of generality, $F_1\neq 0$ and $\hat F_1\neq 0$ on a symmetric domain $U$ in $\Omega_I$ containing $x+yI$ (and $x-yI$).

Since $f_1,\ldots, f_n$ do not have common zeroes in $\Omega_I \subseteq \Omega$, the same condition is satisfied by $F_1,G_1,\ldots,F_n,G_n$. Reasoning as we did in the $n=2$ case, if $F_1(z)\neq 0$, we can reorder the columns of $A$ in such a way that all the elements in the subdiagonal are nonzero multiples of $F_1$ and all entries underneath the subdiagonal vanish.
%so that $\{A^1,\dots,A^{n-1}\}$ is a maximal subset of linearly independent columns on a neighborhood of $z$. 
If $F_1(z)=0$ and $G_1(z)\neq 0$, we can reorder (rows and columns) so that the subdiagonal is composed by nonzero multiples of $G_1$ and all the elements underneath vanish. The process can be iterated up to $G_n$.    
%we can take as a maximal subset of linearly independent columns $\{A^1,A^4,A^5\}$. If both $F_1(z)$ and $G_1(z)$ vanish, 
%we proceed analogously considering $F_2,G_2$.  
Moreover the matrix $\left(A^{2n},A^{2n+1},\ldots,A^{2n \choose 2}\right)$ has a row of zeros. This guarantees that $A$ has rank $2n-1$ on $\Omega_I$. The same argument apply to $B$ since $\hat F_1, \hat G_1,\ldots, \hat F_n, \hat G_n$ do not have common zeroes in $\Omega_I$.
%Proceeding as in proof of Theorem \ref{teolocale}, we find a symmetric domain $U$ in $L_I$ on which both $A$ and $B$ have rank $3$

To prove the third assertion, we will proceed by contradiction. Suppose that the rank of $(A,-B)$ equals $2n-1$ at $z \in \Omega_I$. Then each column of $-B$ is a linear combination of the 
%first $2n-1$ 
columns of $A$, i.e. it belongs to the syzygies of $(F_1,G_1,\ldots, F_n,G_n)$.
%and analogously each column of $A$ is a syzygy of $(-\hat G_1,\hat F_1,\ldots, -\hat G_n,\hat F_n)$.
%By mult columns of $A$ and $B$ leads to
%
%are the standard generators of the syzygies of
%$(F_1,G_1,\ldots, F_n,G_n)$ and of $(-\hat G_1,\hat F_1,\ldots, -\hat G_n,\hat F_n)$ respectively
By taking the scalar product of  each column of $B$ by $(F_1,G_1,\ldots, F_n,G_n)$, %and each column of $A$ by $(-\hat G_1,\hat F_1,\ldots, -\hat G_n,\hat F_n)$
we get ${2n \choose 2}$ equations that, as in the case $n=2$, lead to
\begin{equation}\label{sisteman}
\left\{
\begin{array}{l}
f_{ \sigma }^s = 0 \\
f^c_{\gamma}*f_{\delta}(z)=0\\
f^c_{\gamma}*f_{\delta}(\bar z)=0
\end{array}
\right.
\end{equation}
for any $\sigma,\gamma,\delta\in\{1,\ldots,n\}$, $\gamma\neq \delta$.
%Recall that $f_1^c$ is non-vanishing on $U$; hence if $f_1$ does not vanish on the sphere containing $z$ then $f_1^s(z)\neq 0$ and we find a contradiction. 
As for $n=2$, equations of the first type in system \eqref{sisteman} imply that $f_1,\ldots, f_n$ all have a (not common and not spherical) zero on the $2$-sphere generated by $z$. Following the lines of the proof of Theorem \ref{teolocale} it is possible to prove that the hypothesis that $f_1,\ldots,f_n$ do not have common zeroes leads to a contradiction.
%A computation shows that the rank of $(A,-B)$ is $2n$ on $U$.
\end{proof}

The previous lemma allows us to prove the following local result, using the same arguments of the case $n=2$.

\begin{theorem} \label{teolocalenbiss}
Let $f_1,\ldots,f_n$ be $n$ functions, regular on a symmetric slice domain $\Omega$  without common zeroes. Then for any $I\in \s$ the equation
\begin{equation}\label{localenbiss}
f_1*h_1+\cdots+f_n*h_n=1.
\end{equation}
restricted to $\Omega_I$ has local holomorphic solutions $h_1, \ldots, h_n$ in the neighborhood of any point of $\Omega_I$.
\end{theorem}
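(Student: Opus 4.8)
The plan is to follow exactly the structure of the proof of Theorem \ref{teolocale}, replacing the explicit $6\times 6$ matrices by their $n$-ary analogues and invoking Lemma \ref{ranghi} at the crucial point. First I would use the Splitting Lemma to write, for every $I\in\s$ and $J\perp I$, the representations $f_{\ell|I}=F_\ell+G_\ell J$ and $h_{\ell|I}=H_\ell+K_\ell J$ for $\ell=1,\ldots,n$. Substituting into \eqref{localenbiss} and expanding the $*$-product via \eqref{prodottostar} turns the single quaternionic equation into a system of two equations in the $L_I$-valued holomorphic functions $H_\ell,K_\ell,\hat H_\ell,\hat K_\ell$, namely $\sum_\ell(F_\ell H_\ell-G_\ell\hat K_\ell)=1$ and $\sum_\ell(F_\ell K_\ell+G_\ell\hat H_\ell)=0$, exactly as in \eqref{sistemalocaledueconcappelli} but with $n$ summands.

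Next I would solve the first equation. Since $f_1,\ldots,f_n$ have no common zeroes on $\Omega$, neither do $F_1,G_1,\ldots,F_n,G_n$ on $\Omega_I$; hence by the classical one-variable result (Cartan / ``corona'' for the unit ideal over a pseudoconvex planar domain, or simply the fact that the ideal sheaf they generate is all of $\mathcal{O}$) there is a holomorphic solution of the first equation. The general solution of the first equation is obtained by adding to it an arbitrary element of the module of syzygies of $(F_1,G_1,\ldots,F_n,G_n)$, which is generated by the $\binom{2n}{2}$ standard Koszul-type columns forming the matrix $A$ of Lemma \ref{ranghi}(1). Imposing that the thus-modified candidate also satisfies the second equation amounts, after taking conjugates as in the passage from \eqref{syzygies} to \eqref{capelli}, to requiring that a certain vector $H+B\beta$ lie in the syzygies of $(F_1,G_1,\ldots,F_n,G_n)$, where $B$ is the matrix whose columns are the standard generators of the syzygies of $(-\hat G_1,\hat F_1,\ldots,-\hat G_n,\hat F_n)$ as in Lemma \ref{ranghi}(2). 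This produces the linear system
\begin{equation*}
\big(\begin{array}{cc} A, & -B\end{array}\big)\left(\begin{array}{l}\alpha\\ \beta\end{array}\right)=H,
\end{equation*}
with $H={}^t(K_1,\hat H_1,\ldots,K_n,\hat H_n)$, whose solvability near each point is what we need.

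The decisive step, and the only genuinely nontrivial one, is that the $\left(2n\times 2\binom{2n}{2}\right)$-matrix $(A,-B)$ has rank $2n$ everywhere on $\Omega_I$ — but this is precisely Lemma \ref{ranghi}(3), whose proof in turn reduces, via the equations \eqref{sisteman}, to the same local zero-counting argument carried out in detail in Theorem \ref{teolocale}: if the rank dropped to $2n-1$ at some $z=x+yI$, then $f_1^s(z)=\cdots=f_n^s(z)=0$, so every $f_\ell$ has a non-spherical zero on $x+y\s$, and analyzing $f_\gamma^c*f_\delta(z)=f_\gamma^c*f_\delta(\bar z)=0$ through Proposition \ref{trasf} and Corollary \ref{productzeros} forces two distinct $f$'s to vanish at a common point of that sphere, contradicting the no-common-zeroes hypothesis. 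Granting Lemma \ref{ranghi}(3), the Rouché–Capelli theorem guarantees that the system $(A,-B)\,{}^t(\alpha,\beta)=H$ has holomorphic solutions $(\alpha,\beta)$ in a neighborhood of every point of $\Omega_I$; unwinding the substitutions yields local holomorphic $H_\ell,K_\ell$ solving \eqref{sistemalocaledueconcappelli}-type system with $n$ terms, hence local holomorphic $h_1,\ldots,h_n$ solving \eqref{localenbiss} restricted to $\Omega_I$. The main obstacle is thus entirely absorbed into Lemma \ref{ranghi}; everything else is the verbatim $n$-ary transcription of the $n=2$ argument, so I would keep the write-up short, pointing to Lemma \ref{ranghi} and to the proof of Theorem \ref{teolocale} rather than repeating the computations.
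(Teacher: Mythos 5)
Your proposal matches the paper's own treatment: the authors prove Lemma \ref{ranghi} precisely so that Theorem \ref{teolocalenbiss} follows by ``the same arguments of the case $n=2$,'' i.e.\ the Splitting Lemma reduction, the syzygy modification leading to the system $(A,-B)\,{}^t(\alpha,\beta)=H$, and Rouch\'e--Capelli once the rank of $(A,-B)$ is known to be $2n$. Your write-up is, if anything, more explicit than the paper's, which gives no separate proof beyond citing Lemma \ref{ranghi} and Theorem \ref{teolocale}.
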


As in the proof of  Theorem \ref{teoglobale}, the consequence of Cartan Theorem B  stated in Theorem \ref{fasci} lead us to find a global solution of equation \eqref{localenbiss} on $\Omega_I$.  The Extension Lemma \ref{extensionlemma} provides a global regular solution on $\Omega$.

\begin{theorem}\label{teoglobalen}
Let $f_1, \ldots, f_n$ be regular functions on a symmetric slice domain $\Omega\subseteq\hh$, with no common zeroes in $\Omega$. Then there exist $h_1, \ldots, h_n$ regular functions  on $\Omega$ such that
\begin{equation*}
f_1*h_1+\cdots +f_n*h_n=1
\end{equation*}
on $\Omega$.
\end{theorem}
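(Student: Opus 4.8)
The plan is to follow exactly the path laid out for the case $n=2$ in Theorems \ref{teolocale} and \ref{teoglobale}, using the preparatory work already done in Lemma \ref{ranghi}. Fix $I\in\s$ and write, via the Splitting Lemma, $f_\ell = F_\ell + G_\ell J$ on $\Omega_I$ for $\ell=1,\dots,n$, with $J\perp I$, and look for the unknowns in the form $h_\ell = H_\ell + K_\ell J$. Using the formula \eqref{prodottostar} for the $*$-product of splittings, equation \eqref{localenbiss} restricted to $\Omega_I$ becomes a system of two holomorphic equations in $L_I$, the analogue of \eqref{sistemalocaledueconcappelli}: a "real part" equation $\sum_\ell (F_\ell H_\ell - G_\ell \hat K_\ell)=1$ and an "imaginary part" equation $\sum_\ell (F_\ell K_\ell + G_\ell \hat H_\ell)=0$. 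Since $f_1,\dots,f_n$ have no common zeroes on $\Omega$, neither do $F_1,G_1,\dots,F_n,G_n$ on $\Omega_I$, so by the classical one-variable result the first equation admits a global holomorphic solution; the general solution is obtained by adding an arbitrary element of the module of syzygies of $(F_1,G_1,\dots,F_n,G_n)$, whose standard generators are the columns of the matrix $A$ of Lemma \ref{ranghi}. Imposing the second equation on this general solution amounts, after conjugation and reading off the coefficients, to requiring that a certain explicit vector $H$ of holomorphic functions (built from $K_\ell,\hat H_\ell$) lie in the span of $A$ modulo the columns of a second matrix $B$; that is, to solving a linear system $(A,-B)\binom{\alpha}{\beta}=H$ of $2n$ equations in $2\binom{2n}{2}$ unknowns over the ring of germs of holomorphic functions on $\Omega_I$.

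The engine of the argument is the rank statement. By part (3) of Lemma \ref{ranghi}, the $\bigl(2n\times 2\binom{2n}{2}\bigr)$-matrix $(A,-B)$ has rank $2n$ at every point of $\Omega_I$. First I would use this to get a \emph{local} solution: near each point of $\Omega_I$, full row rank lets us select $2n$ columns of $(A,-B)$ that are linearly independent there, invert the corresponding square submatrix, and solve $(A,-B)\binom{\alpha}{\beta}=H$ by Rouch\'e--Capelli; reading back through the splittings yields local holomorphic solutions $h_1,\dots,h_n$ of \eqref{localenbiss} on $\Omega_I$. This is precisely the content of Theorem \ref{teolocalenbiss}. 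To globalize, I would invoke Theorem \ref{fasci} (Cartan's Theorem B, in the form stated) for the coherent analytic sheaf $(\mathcal O^{2n},\Omega_I)$ over the pseudoconvex domain $\Omega_I\subseteq L_I\simeq\C$: the rank-$2n$ condition says exactly that the $2\binom{2n}{2}$ columns $A^1,\dots,B^{\binom{2n}{2}}$, viewed as global sections of $\mathcal O^{2n}$, generate the stalk $\mathcal O^{2n}_z$ at every $z\in\Omega_I$. Hence any global section $H\in\Gamma(\Omega_I,\mathcal O^{2n})$ — in particular ours — is a global $\mathcal O(\Omega_I)$-combination of these columns, which gives global holomorphic $h_1,\dots,h_n$ on $\Omega_I$ with $\sum_\ell f_\ell * h_\ell = 1$ there. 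Finally, applying the Extension Lemma \ref{extensionlemma} to each $h_\ell$ produces the unique regular extensions $h_1,\dots,h_n$ on $\Omega$; since $*$-products and the constant $1$ are slice regular and agree with their holomorphic restrictions on $\Omega_I$, the Identity Principle (Theorem \ref{Id}) forces $f_1*h_1+\cdots+f_n*h_n=1$ on all of $\Omega$.

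The one genuine obstacle, and the only place where the quaternionic geometry of zeroes enters, is hidden in Lemma \ref{ranghi}(3), which I am allowed to assume here but which is the heart of the matter: showing that the rank of $(A,-B)$ never drops to $2n-1$. The rank drops at $z$ precisely when every column of $B$ already lies in the syzygies of $(F_1,G_1,\dots,F_n,G_n)$; pairing the columns of $B$ with the vector $(F_1,G_1,\dots,F_n,G_n)$ then forces the system \eqref{sisteman}, i.e. $f_\sigma^s(z)=0$ for all $\sigma$ and $f_\gamma^c*f_\delta(z)=f_\gamma^c*f_\delta(\bar z)=0$ for all $\gamma\neq\delta$. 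The vanishing of all the symmetrizations $f_\sigma^s$ means every $f_\sigma$ has a (necessarily non-spherical, since the $f_\sigma$ share no zero) zero on the $2$-sphere through $z$; then the argument from Theorem \ref{teolocale} — splitting into the cases $f_\gamma^c(z)=0$ versus $f_\gamma^c(z)\neq0$ and using Proposition \ref{trasf} to convert each $*$-vanishing into a genuine pointwise zero of the appropriate $f_\ell$ at the conjugated point $(f_\gamma^c(\cdot))^{-1}(\cdot)f_\gamma^c(\cdot)$ — produces two functions among $f_1,\dots,f_n$ with a common zero, contradicting the hypothesis. With Lemma \ref{ranghi} in hand, the remaining steps are formal, being verbatim repetitions of the $n=2$ proofs with $4$ replaced by $2n$ and $6$ by $\binom{2n}{2}$; the subtlety is entirely combinatorial-geometric and already dispatched in the lemma.
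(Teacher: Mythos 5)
Your proposal is correct and follows essentially the same route as the paper: split on $\Omega_I$, reduce to the linear system $(A,-B)\binom{\alpha}{\beta}=H$, use the everywhere-full-rank statement of Lemma \ref{ranghi}(3) to see that the columns generate the stalk $\mathcal O^{2n}_z$ at every point, apply Theorem \ref{fasci} to obtain a global holomorphic solution on $\Omega_I$, and extend to $\Omega$ by the Extension Lemma. You also correctly locate the only non-formal content in Lemma \ref{ranghi}(3), exactly as the paper does.
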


\section{Syzygies of regular functions}
We conclude the paper with a short description of the syzygies of regular functions.
Let us begin by studying the structure of the sheaf of 
%local solutions of equation \eqref{localenbiss} restricted to a suitable slice $L_I$.
local syzygies of $n$ regular functions.
\begin{theorem}\label{coherentn}
Let $f_1,\ldots, f_n$ be $n$ regular functions on a symmetric slice domain $\Omega$, with no common zeroes. 
For any $I\in \s$, and any $J\in \s, J\perp I$, let $f_\ell=F_\ell+G_\ell J$ ($\ell=1,\ldots, n$) for suitable holomorphic functions $F_\ell, G_\ell$.
%and $W\subseteq \Omega_I$ be as in Theorem \ref{teolocalenbiss}.
%let $(\ff,\Omega_I)$ be
%the sheaf of germs of holomorphic functions on $\Omega_I$, and 
If $(\mathcal{K},\Omega_I)$ is  the sheaf of germs of holomorphic solutions of the system
\begin{equation}
\left\{\begin{array}{l}
F_1H_1-G_1\hat K_1+\cdots +F_nH_n-G_n\hat K_n=0\\
F_1K_1+G_1\hat H_1+\cdots +F_nK_n+G_n\hat H_n=0.\end{array}\right.
\end{equation}
associated with 
\begin{equation}
f_1*h_1+\cdots+ f_n*h_n=0
\end{equation}
restricted to $\Omega_I$, then
%, i.e., the sheaf of germs of syzygies of $(f_1, \ldots, f_n)$ restricted to $\Omega_I$. Then
\[
(\mathcal{K},\Omega_I)\cong (\ff^{4n^2-4n}, \Omega_I)/(\ff^{4n^2-6n +2}, \Omega_I).
\]
\end{theorem}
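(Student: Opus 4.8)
The plan is to unwind the bookkeeping that already appears in the proof of Theorem~\ref{teolocale} and Lemma~\ref{ranghi}, now keeping track not just of ranks but of the full resolution. First I would set up the notation: writing $H = {}^t(H_1,K_1,\dots,H_n,K_n)$ for the unknowns (after passing to the hatted variables as in \eqref{sistemalocaledueconcappelli}), the two scalar equations associated with $f_1*h_1+\cdots+f_n*h_n=0$ say precisely that a certain $2n$-vector built linearly from $H$ — call it $\Phi(H)$, with $\Phi\colon \ff^{2n}\to\ff^{2n}$ the $\ff_{\Omega_I}$-linear map whose matrix combines the coefficients $F_\ell,G_\ell,\hat F_\ell,\hat G_\ell$ — annihilates against the vector $v=(F_1,G_1,\dots,F_n,G_n)$. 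Reinterpreting: the first equation forces $\Phi_1(H)$ (the part not involving hats on $H$) to lie in $\syz(v)$, and then the second equation forces a correction term built from $B$ to also lie in $\syz(v)$. This is exactly the linear system $(A,-B)\,{}^t(\alpha,\beta)=H$ of \eqref{capelli}, but now with right-hand side $0$, i.e. I want to describe $\mathcal K = \ker$ of the map $\ff^{2n}\to \ff^{4n^2-4n}$ given by ``express $H$ through $\alpha,\beta$'' — more precisely, $\mathcal K$ is the image of the map $\psi\colon \ff^{2\binom{2n}{2}}=\ff^{4n^2-2n}\to \ff^{2n}$, $(\alpha,\beta)\mapsto A\alpha - B\beta$, which sends a pair of syzygy-coefficient vectors to the corresponding solution $H$ of the homogeneous system.

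Next I would invoke part~(3) of Lemma~\ref{ranghi}: the $(2n)\times 2\binom{2n}{2}$ matrix $(A,-B)$ has rank $2n$ at every point of $\Omega_I$, i.e. $\psi$ is a surjection of $\ff_{\Omega_I}$-sheaves onto $\ff^{2n}$. Wait — but $\mathcal K$ should be a \emph{sub}sheaf of $\ff^{2n}$, namely the solutions $H$; and indeed every $H\in\ff^{2n}$ arises (that is the content of Theorem~\ref{teolocalenbiss}/surjectivity), so $\mathcal K = \ff^{2n}$ would be the wrong object. The correct reading is: $\mathcal K$ lives one step back. The homogeneous system $f_1*h_1+\cdots+f_n*h_n=0$ has solution sheaf $\mathcal K\subseteq \ff^{2n}$ equal to $\ker$ of the map $\mu\colon\ff^{2n}\to\ff^{2}$ encoding the two scalar equations. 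So I would instead present the resolution
\[
0 \longrightarrow \mathcal R \longrightarrow \ff^{2\binom{2n}{2}} \xrightarrow{\ (A,-B)\ } \ff^{2n} \longrightarrow 0,
\]
which is exact on the left and right by surjectivity (Lemma~\ref{ranghi}(3)) together with the identification of $\mathcal R := \ker(A,-B)$; and then show $\mathcal K \cong \operatorname{coker}$ of a further map, OR — following the paper's own formula — that $\mathcal K$ itself is the cokernel $\ff^{4n^2-4n}/\ff^{4n^2-6n+2}$. So the key computation is: $\mathcal K$ is generated by $4n^2-4n$ germs (the images under $\psi$ of the standard syzygy generators, of which there are $2\binom{2n}{2}=4n^2-2n$, but $2n$ of these are redundant because $\psi$ is not injective — its kernel $\mathcal R$ has rank $2\binom{2n}{2}-2n = 4n^2-4n$... hmm, $4n^2-2n-2n=4n^2-4n$, good) and the relations among \emph{those} generators form a free module of rank $4n^2-6n+2$.

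Concretely the steps I would carry out are: (i) identify $\mathcal K$ as the image of $\psi = (A,-B)\colon\ff^{4n^2-2n}\to\ff^{2n}$ restricted appropriately — more carefully, as the sheaf of $H$ arising from homogeneous syzygy data, which by a diagram chase equals $\ff^{4n^2-2n}/\ker\psi$; (ii) compute $\operatorname{rank}\ker\psi$ pointwise: since $\operatorname{rank}(A,-B)=2n$ everywhere by Lemma~\ref{ranghi}(3), the kernel has constant rank $4n^2-2n-2n = 4n^2-4n$, hence (being a kernel of a constant-rank map of locally free sheaves over a one-dimensional Stein manifold $\Omega_I$) it is itself locally free of that rank; (iii) present a free resolution of $\ker\psi$: by the analogous rank count applied one step further (the relations among the $4n^2-4n$ generators of $\ker\psi$ are themselves generated with $4n^2-6n+2$ of them independent — this is where I must exhibit the explicit $(4n^2-4n)\times(\text{something})$ relation matrix and compute its generic rank $=(4n^2-4n)-(4n^2-6n+2)=2n-2$), giving
\[
\ff^{4n^2-6n+2}\longrightarrow \ff^{4n^2-4n}\longrightarrow \mathcal K\longrightarrow 0;
\]
(iv) conclude $\mathcal K\cong \ff^{4n^2-4n}/\ff^{4n^2-6n+2}$ as claimed, where the map is understood to have the stated locally free image, using Hilbert's syzygy theorem over the regular one-dimensional local ring $\ff_z$ to guarantee the resolution stops and is of the predicted length. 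The main obstacle I expect is step~(iii): writing down the relation matrix among the images of the standard syzygy generators under $(A,-B)$ and verifying its rank is identically $2n-2$ on all of $\Omega_I$ — this requires redoing the ``no-common-zeroes forces a contradiction'' argument of Theorem~\ref{teolocale} at the level of second syzygies, checking that the loci where the rank would drop are exactly where $f_\gamma^s$, $f_\gamma^c*f_\delta(z)$, $f_\gamma^c*f_\delta(\bar z)$ all vanish, which Lemma~\ref{ranghi} already showed is empty. Everything else is the linear algebra of Koszul-type syzygies over a PID of germs, which is routine once the constant-rank statements are in hand.
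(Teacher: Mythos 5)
Your target presentation and the numerology are right ($4n^2-4n=2\binom{2n}{2}-2n$ is the rank of $\ker(A,-B)$, and $4n^2-6n+2=2\bigl(\binom{2n}{2}-(2n-1)\bigr)$), and you correctly extract from Lemma \ref{ranghi}(3) that $\ker(A,-B)$ is locally free of rank $4n^2-4n$. But the proposal never pins down the two maps that make the presentation work, and where you try, you land on the wrong object. In step (i) you identify $\mathcal K$ with $\ff^{4n^2-2n}/\ker\psi$; since $\psi=(A,-B)$ is surjective onto $\ff^{2n}$ by Lemma \ref{ranghi}(3), that quotient is just $\ff^{2n}$, which cannot be $\mathcal K$ (whose generic rank is $2n-2$, being cut out by two scalar equations in $2n$ unknowns). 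The identification that actually drives the proof is: the first homogeneous equation says the vector $(H_1,-\hat K_1,\dots,H_n,-\hat K_n)$ is a syzygy of $(F_1,G_1,\dots,F_n,G_n)$, hence locally of the form $A\hat\beta$; the second equation then says $A\alpha=B\beta$ for some $\alpha$. So $\mathcal K$ is the image of $\ker(A,-B)\cong\ff^{4n^2-4n}$ under the map $\varphi$ that reads the solution $(H_\ell,K_\ell)$ off from $\beta$ --- a quotient of $\ker\psi$, not of $\ff^{4n^2-2n}$ and not a kernel one step back.

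Your ``main obstacle'' (iii) is then a misdiagnosis: no second-syzygy analogue of the no-common-zeroes contradiction is needed. A pair $(\alpha,\beta)\in\ker(A,-B)$ maps to the zero solution if and only if $A\hat\beta=0$, which (since $B$ is a signed permutation of $\hat A$) is equivalent to $B\beta=0$, and then automatically $A\alpha=0$; hence $\ker\varphi=\ker A\oplus\ker B$. That this is locally free of rank $2\bigl(\binom{2n}{2}-(2n-1)\bigr)=4n^2-6n+2$ is exactly parts (1) and (2) of Lemma \ref{ranghi} --- the constant rank $2n-1$ of $A$ and of $B$ separately --- which your argument never invokes. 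Your arithmetic at that step is also inconsistent: the relation module injects into $\ff^{4n^2-4n}$ with rank $4n^2-6n+2$, whereas $2n-2=(4n^2-4n)-(4n^2-6n+2)$ is the generic rank of the quotient $\mathcal K$, not of any relation matrix. Once $\varphi$ and $\ker\varphi$ are identified as above, the theorem follows at once, with no appeal to Hilbert's syzygy theorem.
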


\begin{proof}
\noindent
%As for the case $n=2$, 
Using the same notation of Lemma \ref{ranghi}, the sheaf $(\mathcal{K},\Omega_I)$ corresponds to the sheaf of germs of local solutions of the system of $2n$ equations in $2{2n \choose 2}$ unknowns
%it is direct to see that the local solutions to equation
%\begin{equation}
%f_1*h_1+ f_2*h_2=0
%\end{equation}
%restricted to a complex plane $L_I$ can be found as solutions of the system
\begin{equation}\label{syzy}
\big(\begin{array}{c  c}
 A  , & -B
 \end{array}
 \big)
 \left(\begin{array}{l}
 \alpha \\ \beta
 \end{array}
  \right)
   =0.
 \end{equation}
%With the notation used in the proof of Theorem \ref{teolocale}, it easy to see that the  matrix $(A, -B)$ has rank $4$, both matrices $A$ and $B$ have rank $3$, and $A^1,A^2,A^3$ and $B^1,B^2,B^3$ are maximal sets of linearly independent columns of $A$ and $B$ respectively.
%Therefore each element of  $\mathcal{K}$ can be written in terms of $8$ (germs of)  holomorphic functions.
%Since with our choice of $I$, the matrix $(A^1,A^2,A^3,B^{\ell})$ is invertible for some $\ell\in \{1,2,3\}$, say $\ell=1$, we can write for arbitrary germs of holomorphic functions $\alpha_4,\alpha_5,\alpha_6,\beta_2,\beta_3,\beta_4,\beta_5,\beta_6$,
%\begin{equation}
%\left\{
%\begin{array}{l}
%\alpha_{1}=\alpha_{1}(\alpha_4,\alpha_5,\alpha_6,\beta_2,\beta_3,\beta_4,\beta_5,\beta_6)\\
%\alpha_2=\alpha_2(\alpha_4,\alpha_5,\alpha_6,\beta_2,\beta_3,\beta_4,\beta_5,\beta_6)\\
%\alpha_3=\alpha_3(\alpha_4,\alpha_5,\alpha_6,\beta_2,\beta_3,\beta_4,\beta_5,\beta_6)\\
%\beta_{1}=\beta_{1}(\alpha_4,\alpha_5,\alpha_6,\beta_2,\beta_3,\beta_4,\beta_5,\beta_6).\\
%\end{array}
%\right.
%\end{equation}
Lemma \ref{ranghi} yields that we can express locally $2n$ unknowns as holomorphic functions in terms of $2{2n \choose 2}-2n=4n^2-4n$ germs of holomorphic functions.
We therefore obtain a surjective map
$$
\varphi: (\ff^{4n^2-4n}, \Omega_I) \to (\mathcal K, \Omega_I).
$$
The germ  in $(\ff^{4n^2-4n}, \Omega_I)$ associated with the vector $^t(\alpha, \beta)$, solution of \eqref{syzy},  belongs to $\ker \varphi$ if and only if
$$
A\alpha=B\beta=0,
$$
which, recalling that the rank of $A$ and $B$ equals $2n-1$, implies 
%(analogously to what happens in the case $n=2$)
%\begin{equation}
%\left\{
%\begin{array}{l}
%\alpha_1=\alpha_1(\alpha_4,\alpha_5,\alpha_6)\\
%\alpha_2=\alpha_2(\alpha_4,\alpha_5,\alpha_6)\\
%\alpha_3=\alpha_3(\alpha_4,\alpha_5,\alpha_6)\\
%\beta_1=\beta_1(\beta_4,\beta_5,\beta_6)\\
%\beta_2=\beta_2(\beta_4,\beta_5,\beta_6)\\
%\beta_3=\beta_3(\beta_4,\beta_5,\beta_6).\\
%\end{array}
%\right.
%\end{equation}
%As a consequence,
that the kernel of $\varphi$ is isomorphic to $(\ff^{4n^2-6n+2}, \Omega_I)$. Hence we conclude that $(\mathcal{K},\Omega_I)$  is isomorphic to $(\ff^{4n^2-4n}, \Omega_I)/(\ff^{4n^2-6n+2}, \Omega_I)$.
\end{proof}

In the complex case, if $f_1, \ldots , f_n$ are holomorphic functions of one complex variable with no common zeroes, then their syzygies are generated by  ${n \choose 2}$ vectors of holomorphic functions which can be constructed as follows: let $e_\ell$, $\ell = 1, \ldots, n$, be the standard basis of $\rr^n$. The generators of the syzygies are then
$$
f_re_t - f_te_r= (0,\ldots ,0, -f_t,0, \dots, 0, f_r, 0,\ldots, 0)
$$
for $1\leq r <t\leq n$, a fact which we have repeatedly used in the previous section. It is therefore natural to ask if a similar situation occurs for regular functions without common zeroes. Since the $*$-multiplication is not commutative, the immediate analogue of these syzygies does not work in this context. Natural syzygies would on the other hand be the vectors
$$
\syz(r,t):=(f_t^c*f^s_r)e_t - (f^c_r*f^s_t)e_r= (0,\ldots,0 , - f^c_r*f^s_t,0, \dots, 0, f_t^c*f^s_r, 0, \ldots, 0)
$$
for $1\leq r < t\leq n$.  In fact, Formula \eqref{uuu} implies that (see Definition \ref{R-coniugata}),
$$
f_r*(-f_r^c*f_t^s)+f_t*(f_t^c*f_r^s)=0
$$
for all $1\leq r < t\leq n$.
For $n\geq2$, as in the case of holomorphic functions, there are ${n \choose 2}$ syzygies, though Theorem
\ref{coherentn} immediately implies the following proposition.

\begin{proposition} Let $f_1, \ldots, f_n$ be regular functions on a slice symmetric domain $\Omega$ of $\hh$ with no common zeroes. Then their syzygies are locally generated by  $n-1$ vectors of regular functions.
%$$
%\syz(1,t)=(f_t^c*f^s_1)e_t- (f^c_1*f^s_t)e_1= (- f^c_1*f^s_t,\dots, f_t^c*f^s_1, \ldots, 0)
%$$
%for $1< t \leq n$. Moreover any syzygy \syz(r,t) can be obtained as
%$$
%f^s_rf^s_t\syz(r,t)= \syz(1,r
%$$
\end{proposition}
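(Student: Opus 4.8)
The plan is to read the assertion off from Theorem~\ref{coherentn}, once one knows how a sheaf of the kind produced there corresponds, via the Splitting Lemma~\ref{split} and the Extension Lemma~\ref{extensionlemma}, to a sheaf of modules of regular functions. Fix $I\in\s$ and $J\in\s$ with $J\perp I$, and write $f_\ell=F_\ell+G_\ell J$ on $\Omega_I$. By splitting, a vector $(v_1,\dots,v_n)$ of regular functions with $f_1*v_1+\cdots+f_n*v_n=0$ restricts on $\Omega_I$ to a $2n$-tuple of holomorphic functions solving the system of Theorem~\ref{coherentn}, hence to a section of $(\mathcal K,\Omega_I)$; conversely, near every sphere $S=x+y\s\subset\Omega$ (and, similarly, near every real point of $\Omega$) the Extension Lemma turns this into a bijection between the germ $\mathcal S_S$ of the sheaf of regular syzygies of $(f_1,\dots,f_n)$ at $S$ and the germ of $\mathcal K$ at the point(s) of $\Omega_I$ cut out by $S$. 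So it suffices to understand $\mathcal K$ locally on $\Omega_I$ and then carry the information back.

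The first step is to show that $\mathcal K$ is locally free of rank $2(n-1)$ over $\ff=\mathcal O_{\Omega_I}$. By Theorem~\ref{coherentn} the stalk $\mathcal K_z$ sits in an exact sequence $0\to\ff_z^{\,4n^2-6n+2}\to\ff_z^{\,4n^2-4n}\to\mathcal K_z\to 0$ of modules over the discrete valuation ring $\ff_z$, and it is torsion free: multiplying a nonzero holomorphic syzygy by a nonzero holomorphic function (through $\ext$) cannot produce the zero syzygy, since each component gets multiplied by a nonzero element of the integral domain $\ff_z$. A finitely generated torsion free module over a discrete valuation ring is free, so $\mathcal K_z$ is free of rank $(4n^2-4n)-(4n^2-6n+2)=2(n-1)$.

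The second step is the translation. Near a sphere $S\subset\Omega$, the Extension Lemma identifies the local ring $\mathcal R_S$ of germs of regular functions with germs of holomorphic $\hh$-valued functions on $\Omega_I$ near $S\cap L_I$, and the decomposition $f_I=F+GJ$ exhibits $\mathcal R_S$ as a free module of rank $2$, with basis $\{1,\ext(J)\}$, over the commutative ring $\mathcal L_S$ of germs at $S\cap L_I$ of holomorphic $L_I$-valued functions; in the same way $\mathcal S_S$ becomes, by the first step, a free $\mathcal L_S$-module of rank $2(n-1)$. Now $\mathcal L_S$ is a product of discrete valuation rings, one for each of the one or two points of $S\cap L_I$, and $\mathcal R_S=\mathcal L_S\oplus\mathcal L_S\ext(J)$ with $\ext(J)^2=-1$ and $\ext(J)\,c=\hat c\,\ext(J)$; one checks that $\mathcal R_S$ is isomorphic to the matrix ring $M_2$ over a discrete valuation ring when $S$ is a genuine $2$-sphere (where $\mathcal L_S$ has two factors, interchanged by $c\mapsto\hat c$), and is a noncommutative local ring whose Jacobson radical $J(\mathcal R_S)$ is generated by a uniformizer of $\mathcal L_S$ and whose residue ring is the division algebra $\hh$ when $S=\{x_0\}$ is a real point. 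In the first case Morita equivalence shows that an $\mathcal R_S$-module that is $\mathcal L_S$-free of rank $2(n-1)$ is $\mathcal R_S$-free of rank $n-1$; in the second, the residue ring being a division ring, Nakayama's lemma gives that the minimal number of generators of $\mathcal S_S$ is $\dim_{\hh}\!\bigl(\mathcal S_S/J(\mathcal R_S)\mathcal S_S\bigr)=\tfrac12\dim_{\mathbb C}\!\bigl(\mathcal S_S/J(\mathcal R_S)\mathcal S_S\bigr)=n-1$. Either way $\mathcal S_S$ is generated by $n-1$ regular syzygies, which is the claim.

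The part I expect to be delicate is the second step: turning the informal ``divide the rank by two'' into a correct argument requires pinning down the module structure of the sheaf of regular functions over the sheaf of $L_I$-valued ones, and in particular treating separately the case of non-real spheres, where the relevant local ring splits as a $2\times2$ matrix ring over a discrete valuation ring, and the case of real points, where it is an order in a quaternion division algebra with residue division ring $\hh$. The only other point needing care is the local freeness of $\mathcal K$ used in the first step — equivalently, that the presentation in Theorem~\ref{coherentn} has a locally split kernel.
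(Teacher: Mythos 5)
Your proposal is correct in substance and reaches the right count, but it takes a genuinely different route from the paper at the one step that carries all the content. Both arguments start from Theorem~\ref{coherentn} and the computation $(4n^2-4n)-(4n^2-6n+2)=2(n-1)$; the paper then treats the Proposition as an immediate consequence and, instead of dividing ranks abstractly, exhibits the local generators: it introduces the $\binom{n}{2}$ natural syzygies $\syz(r,t)=(f_t^c*f_r^s)e_t-(f_r^c*f_t^s)e_r$, proves the relation \eqref{equa}, and notes that near a sphere $S$ one can choose the index $p$ for which $f_p^s$ has least order of vanishing on $S$, so that \eqref{equa1} writes every $\syz(r,t)$ in terms of the $n-1$ syzygies $\syz(p,t)$, $t\neq p$, with regular coefficients $f_\bullet^s*(f_p^s)^{-1}$. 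You replace this constructive step by structure theory of the local ring of regular germs at a sphere ($M_2$ of a discrete valuation ring at a non-real sphere; a noncommutative local ring with residue division ring $\hh$ at a real point) plus Morita equivalence and Nakayama. What you gain is a rigorous justification of the ``divide by two'' that the paper leaves implicit, together with the stronger conclusion that the local syzygy module is \emph{free} of rank $n-1$ over the ring of regular germs; what you lose is any explicit description of the generators. The one point you must still pin down is the module structure in your first step: right $*$-multiplication by an $L_I$-valued holomorphic germ $c$ acts on a splitting $(H,K)$ as $(Hc,K\hat c)$, not componentwise, and this action only makes sense on germs at the conjugate pair $\{z,\bar z\}$ rather than at a single non-real point; the torsion-freeness and rank computation go through for this twisted structure, but that verification is precisely the bookkeeping the paper's explicit formulas sidestep, so it cannot be waved away.
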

\noindent To understand this phenomenon,  we note that for any three indices $1\leq p<r<t\leq n$, we have
\begin{equation}\label{equa}
\syz(r,t)*f_p^s = \syz(p,t)*f^s_r-\syz(p,r)*f^s_t.%f_t^c*f^s_1)e_t- (f^c_1*f^s_t)e_1= (- f^c_1*f^s_t,\dots, f_t^c*f^s_1, \ldots, 0)
\end{equation}
Let us fix a sphere $S=x+y\s\subseteq \Omega$. If one of the functions $f_p, f_r, f_t$ never vanishes on $S$, assume $f_p$, then \eqref{equa} immediately shows that $\syz(r,t)$ is a combination with regular coefficients of   $\syz(p,t)$ and $\syz(p,r)$
\begin{equation}\label{equa1}
\syz(r,t)= \syz(p,t)*f^s_r*(f^s_p)^{-1}-\syz(p,r)*f^s_t *(f^s_p)^{-1}.
\end{equation}
If all $f_p, f_r, f_t$ have a zero on $S$, without loss of generality, we can assume that $f_p$ has the lesser order (for the notion of order of a zero see, e.g., \cite{libroGSS}). Then again \eqref{equa1} can be used to represent $\syz(r,t)$ locally.

\begin{oss} {\rm It therefore appears that the reason why we can reduce to $n-1$ the number of syzygies is a consequence of Remark \ref{oss}, namely the fact that a (isolated, non real) zero of a regular function $f$ generates a sphere of zeroes for $f^s$ and a sphere of poles for its reciprocal $f^{-*}$.
}\end{oss}

 \section*{Acknowledgements}
The first two authors acknowledge the support of G.N.S.A.G.A. of INdAM and of Italian MIUR (Research Projects PRIN ``Real and complex manifolds: geometry, topology and harmonic analysis'', FIRB ``Differential geometry and geometric function theory''). They express their gratitude to Chapman University, where a portion of this work was carried out. The second author is partly supported by the SIR project ``Analytic aspects of complex and hypercomplex geometry''
of the Italian MIUR.

\end{document}